\renewcommand{\phi}{\varphi}
\newcommand{\pphi}{\hat \phi}
\newcommand{\ppsi}{\hat \psi}
\newcommand\R{\mathbb{R}}
\newcommand\N{\mathbb{N}}
\newcommand\I{\mathbbm{1}}
\renewcommand{\P}{\mathcal P}
\newcommand{\Prel}{P^{\mathrm{{rel}}}}
\newcommand{\Drel}{D^{\mathrm{{rel}}}}
\newcommand{\eps}{\varepsilon}
\newtheorem{theorem}{Theorem}[section]
\newtheorem{corollary}[theorem]{Corollary}
\newtheorem{lemma}[theorem]{Lemma}
\newtheorem{proposition}[theorem]{Proposition}
\theoremstyle{definition}
\newtheorem{example}[theorem]{Example}
\newtheorem{remark}[theorem]{Remark}
\DeclareMathOperator{\supp}{supp}
\DeclareMathOperator{\conv}{conv}
\title{A General Duality Theorem for the Monge--Kantorovich Transport Problem}
\begin{document}
\author{Mathias Beiglb\"ock, Christian Leonard, Walter Schachermayer}
\thanks{The first author acknowledges financial support from the Austrian Science
Fund (FWF) under grant P21209. The third author acknowledges support from the Austrian Science Fund
(FWF) under grant P19456, from the Vienna Science and Technology Fund (WWTF) under grant
MA13 and by the Christian Doppler Research Association (CDG).
All authors thank A.~Pratelli for helpful discussions on the topic of this paper. We also thank R.~Balka, F.~Delbaen, M.~Elekes, M.~Goldstern and G.~Maresch for their advice. }
\date{}
\maketitle

\begin{abstract}
{\small

The duality theory of the Monge--Kantorovich transport problem is analyzed in a general
setting. The spaces $X, Y$ are assumed to be polish and equipped
with Borel probability measures $\mu$ and $\nu$. The transport
cost function $c:X\times Y \to [0,\infty]$ is assumed to be Borel.
Our main result states that in this setting there is no duality gap, provided the optimal transport problem is formulated in a suitably relaxed way.
The relaxed transport problem is defined as the limiting cost of the partial transport of masses $1-\varepsilon$ from $(X,\mu)$ to $(Y, \nu)$, as $\varepsilon >0$
tends to zero.

The classical duality theorems of H.\ Kellerer, where $c$ is lower semi-continuous or uniformly bounded, quickly follow from these general results.

\medskip

\noindent
\emph{Keywords: Monge-Kantorovich problem, duality}
}
\end{abstract}

\section{Introduction}

We consider the \emph{Monge-Kantorovich transport problem} for
Borel probability measures  $\mu,\nu$ on polish 
 spaces $X,Y$. See
\cite{Vill03,Vill09} for  an excellent  account of the theory of
optimal transportation.

The set $\Pi(\mu,\nu)$ consists of all Monge-Kantorovich
\emph{transport plans}, that is,  Borel probability measures on
$X\times Y$ which have $X$-marginal $\mu$ and $Y$-marginal $\nu$.
The \emph{transport costs} associated to a transport plan $\pi$
are given by
\begin{equation}\label{CostFunctional} \langle c,\pi\rangle =\int_{X\times Y}c(x,y)\,d\pi(x,y).\end{equation}
In most applications of the theory of optimal transport, the cost
function $c:X\times Y\to [0,\infty]$ is lower semi-continuous and
only takes values in $\R_+.$ But equation (\ref{CostFunctional})
makes perfect sense if the $[0,\infty]$-valued cost function only
is Borel measurable. We therefore assume throughout this paper
that $c:X\times Y\to [0,\infty]$ is a Borel measurable function
which may very well assume the value $+ \infty$ for ``many''
$(x,y) \in X\times Y$.

An application where the value $\infty$ occurs in a natural way is
transport between measures on Wiener space $X=(C[0,1],
\|.\|_{\infty})$, where $c(x,y)$ is the squared norm of $x-y$ in
the Cameron-Martin space, defined to be $\infty$ if $x-y$ does not
belong to this space. Hence in this situation the set
$\{y:c(x,y)<\infty\}$ has $\nu$-measure $0$, for
every $x\in X$, if the measure $\nu$ is absolutely continuous with respect to the Wiener measure on $C[0,1]$. (See \cite{FeUs02,FeUs04a,FeUs04b,FeUs06}).

\medskip

Turning back to the general problem: the (primal)
Monge-Kantorovich problem is to determine the primal value
\begin{equation}
P:=P_c:= \inf\{ \langle c, \pi\rangle:\pi\in \Pi(\mu,\nu)\} 
\label{G1}
\end{equation}
and to identify a primal optimizer $\hat{\pi} \in \Pi(\mu,\nu)$.
To formulate the dual problem, we define
\begin{align}\label{DualSet}
\Psi(\mu,\nu)=\left\{(\varphi,\psi):
\begin{array}{l}\varphi:X\to[-\infty,\infty),\psi:Y\to[-\infty,\infty) \mbox{ integrable,}\\
\varphi(x)+\psi(y)\leq c(x,y) \mbox{ for all }(x,y)\in X\times Y.
\end{array}\right\}.
\end{align}The dual
Monge-Kantorovich problem then consists in determining
\begin{equation}\label{SimpleJ}
D:=D_c:=\sup \left\{ \int_{X}\varphi\,d\mu+\int_{Y}\psi\,d\nu \right\}
\end{equation}
for $(\varphi,\psi)\in\Psi(\mu,\nu)$. We say that Monge-Kantorovich duality
holds true, or that {\it there is no duality gap}, if the primal value $P$ of the problem equals the dual value $D$, i.e.\ if we have
\begin{equation}
\label{G3-A1}
\inf \{\langle c,\pi\rangle :\pi\in\Pi(\mu,\nu)\}=\sup \left\{\int_X \varphi\,d\mu+\int_Y \psi \,d\nu :(\varphi,\psi)\in\Psi (\mu,\nu)\right\}.
\end{equation}

There is a long line of research on these questions, initiated already by Kantorovich (\cite{Kant42}) himself and continued by numerous others 
(we mention
\cite{KaRu58,Dudl76,Dudl02,deAc82,GaRu81,Fern81,Szul82,
Mika06,MiTh06}, see also the bibliographical notes in \cite[p
86, 87]{Vill09}).

The validity of the above duality \eqref{G3-A1} was established in pleasant generality by H.~Kellerer \cite{Kell84}. He proved that there is no duality gap provided that
 $c$ is lower semi-continuous (see \cite[Theorem 2.2]{Kell84}) or just Borel
measurable and bounded by a constant\footnote{or, more generally,
by the sum $f(x)+g(y)$ of two integrable functions $f,g$.}
(\cite[Theorem 2.14]{Kell84}). In \cite{RaRu95,RaRu96} the problem
is investigated beyond the realm of polish spaces and a
characterization is given for which spaces duality holds for all
bounded measurable cost functions. We also refer to the seminal
paper \cite{GaMc96} by W.~Gangbo and R.~McCann. 

\medskip

We now present a rather trivial example\footnote{This is essentially \cite[Example 2.5]{Kell84}.} which shows that, in general, there is a duality gap.

\begin{example}\label{ZeroOneInfty}
Consider $X=Y=[0,1]$ and $\mu=\nu$ the Lebesgue measure. Define $c$ on $X\times Y$ to be $0$ below the diagonal, $1$ on the diagonal and $\infty$ else, i.e.\
\begin{align*}
c(x,y)=\left\{
\begin{array}{cl}
0,&\mbox{ for }  0\le y<x\le 1,\\
1,&\mbox{ for }  0\le x=y\le 1,\\
\infty,&\mbox{ for }  0\le x<y\le 1.
\end{array}\right.
\end{align*}

 Then the only finite transport plan is concentrated on the diagonal and leads to costs of one so that $P=1$. On the other hand, for admissible $(\varphi,\psi)\in\Psi(\mu,\nu),$
it is straightforward to check, that $\varphi(x)+\psi(x)>0$ can hold true for at most countably many $x\in[0,1]$. Hence the dual value equals $D=0$,
so that there is a duality gap.
\end{example}



A common technique in the duality theory of convex optimisation is to pass to a {\it relaxed} version of the problem, i.e., to enlarge the sets over which the primal and/or
dual functionals are optimized.
We do so, for the primal problem (\ref{G1}), by requiring only the transport of a portion of mass $1-\varepsilon$ from $\mu$ to $\nu$, for every $\varepsilon >0$.
Fix $0\le\varepsilon\le 1$ and define
$$\Pi^{\eps}(\mu,\nu)=\{\pi\in\mathcal{M}_+ (X\times Y), \|\pi\|\geq 1-\varepsilon , p_X(\pi)\le \mu, p_Y(\pi)\le \nu\}.$$
Here $\mathcal{M}_+ (X\times Y)$ denotes the non-negative Borel measures $\pi$ on $X\times Y$ with norm $\|\pi\| =\pi(X\times Y)$; by $p_X (\pi)\le\mu$
(resp. $p_Y (\pi)\le\nu$) we mean that the projection of $\pi$ onto $X$ (resp. onto $Y$) is dominated by $\mu$ (resp. $\nu$). We denote by $P^\varepsilon$ the value of the
$1-\varepsilon$ partial transportation problem
\begin{align}
\label{G7} P^\varepsilon :=\inf\left\{\langle
c,\pi\rangle=\int_{X\times Y}\, c(x,y)\,d\pi(x,y):
\pi\in\Pi^{\eps}(\mu,\nu)\right\}.
\end{align}
This partial transport problem has recently been studied by L.~Caffarelli and R.~McCann \cite{CaMc06}  as well as A.~Figalli \cite{Figa09}. In their
work the emphasis is on a finer analysis of the Monge problem for the squared Euclidean distance on $\mathbb{R}^n$, and pertains to a fixed $\varepsilon>0$. In the present
paper, we do not deal with these more subtle issues of the Monge problem and always remain in the realm of the Kantorovich problem (\ref{G1}).
Our emphasis is on the limiting behavior for $\varepsilon\to 0:$ we call
\begin{align}
\label{G8}
\Prel_c:=\Prel := \lim\limits_{\varepsilon\to 0} P^\varepsilon
\end{align}
the {\it relaxed primal value} of the transport plan. Obviously this limit exists (assuming possibly the value $+ ~\infty$) and $\Prel\le P$.

\medskip


As a motivation for the subsequent theorem the reader may observe that, in Example \ref{ZeroOneInfty} above, we have $\Prel=0$ (while $P=1$). Indeed, it is possible to transport the measure $\mu \I_{[\varepsilon,1]}$ to the measure $\nu\I_{[0, 1-\varepsilon]}$ with transport cost zero by the partial transport plan $\pi =(id, id-\varepsilon)_\# ~ (\mu \I_{[\varepsilon ,1]})$.

\medskip

We now can formulate our main result.

\begin{theorem}\label{1.2}
Let $X, Y$ be polish spaces, equipped with Borel probability
measures $\mu,\nu$, and let $c:X\times Y\to [0 ,\infty]$ be Borel
measurable.

Then there is no duality gap, if the primal problem is defined in the relaxed form (\ref{G8}) while the dual problem is formulated in its usual form \eqref{SimpleJ}.
In other words, we have
\begin{align}
\label{G9} \Prel =D.
\end{align}
\end{theorem}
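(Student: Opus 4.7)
\textbf{Weak duality} ($D \leq \Prel$). Fix integrable $(\varphi,\psi) \in \Psi(\mu,\nu)$ and $\pi \in \Pi^\varepsilon(\mu,\nu)$. Writing $p_X(\pi) = f\mu$ and $p_Y(\pi) = g\nu$ with $0\le f, g\le 1$, one has $\int(1-f)\,d\mu,\ \int(1-g)\,d\nu \le \varepsilon$, so
\[\int \varphi\, d\mu + \int \psi\, d\nu = \int(\varphi+\psi)\, d\pi + \int \varphi(1-f)\, d\mu + \int\psi(1-g)\, d\nu \le \int c\, d\pi + R_\varepsilon.\]
The remainder $R_\varepsilon$ is controlled uniformly in $\pi\in\Pi^\varepsilon$ by a uniform-integrability argument: for any $K>0$, $\int\varphi_+(1-f)\, d\mu \le K\varepsilon + \int_{\{|\varphi|>K\}} |\varphi|\, d\mu$, and balancing $K = K(\varepsilon)\to\infty$ with $K\varepsilon\to 0$ (together with the analogous bound for $\psi$) forces $R_\varepsilon\to 0$. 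Taking infimum over $\pi$ and supremum over $(\varphi,\psi)$ yields $D\le\Prel$.

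\textbf{Strong duality} ($\Prel \le D$). Assume $D<\infty$. For each $M\in\N$, the truncation $c_M := c\wedge M$ is bounded Borel, so Kellerer's bounded-Borel duality theorem \cite[Thm.~2.14]{Kell84} gives $P_{c_M} = D_{c_M}$; since any dual pair admissible for $c_M$ is also admissible for $c$, we have $D_{c_M}\le D$, hence $P_{c_M}\le D$. Pick $\pi_M\in\Pi(\mu,\nu)$ with $\int c_M\, d\pi_M \le D + 1/M$. The new idea is to convert these \emph{full} transport plans for the truncated costs into \emph{relaxed} plans for the original cost via a Markov restriction: for $N<M$, the inclusion $\{c>N\}\subseteq\{c_M>N\}$ and Markov's inequality give $\pi_M(\{c>N\}) \le (D + 1/M)/N$. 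Setting $\tilde\pi_M := \pi_M|_{\{c\le N\}}$, we have $\|\tilde\pi_M\| \ge 1 - (D+1/M)/N$, its marginals are dominated by $\mu,\nu$, and (using $c = c_M$ on $\{c\le N<M\}$) $\int c\, d\tilde\pi_M \le \int c_M\, d\pi_M \le D+1/M$. Hence $\tilde\pi_M \in \Pi^{(D+1/M)/N}(\mu,\nu)$ shows $P^{(D+1/M)/N}\le D+1/M$. For any $\varepsilon_0>0$, choose $N$ and then $M$ so that $(D+1/M)/N<\varepsilon_0$; monotonicity of $\varepsilon\mapsto P^\varepsilon$ gives $P^{\varepsilon_0}\le D+1/M$, and letting $M\to\infty$ yields $P^{\varepsilon_0}\le D$. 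Since $\varepsilon_0$ was arbitrary, $\Prel = \lim_{\varepsilon\to 0} P^\varepsilon \le D$.

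\textbf{Main obstacle.} All the genuine analytic difficulty is absorbed into Kellerer's bounded-Borel theorem, invoked here as a black box. The only substantively new step is the Markov-restriction construction of relaxed transports from near-optimal full transports for truncated costs---this is precisely the mechanism by which the relaxation erases the duality gap seen in Example~\ref{ZeroOneInfty}. A caveat: if the authors intend Theorem~\ref{1.2} to be proved \emph{independently} so that Kellerer's results are later recovered as corollaries, one would have to replace the black-box step with a direct argument (e.g.\ via a Choquet-type capacitability or Hahn--Banach argument on a suitable space of bounded functions), which is where the real technical work would then lie.
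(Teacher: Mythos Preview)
Your argument is correct, but the route differs substantially from the paper's. The paper proves Theorem~\ref{1.2} \emph{from scratch} via Fenchel perturbation: it introduces the value function $\Phi(f,g)=\inf\{\int c\,d\pi:\pi\in\Pi(f\mu,g\nu)\}$ on $V_+\subset L^1(\mu)\times L^1(\nu)$, shows its lower semi-continuous envelope at $(\I,\I)$ equals $\Prel$, and then applies Hahn--Banach to produce $(\varphi_n,\psi_n)\in L^\infty\times L^\infty$ with $\langle(\varphi_n,\psi_n),(f,g)\rangle\le\Phi(f,g)$ and $\int\varphi_n\,d\mu+\int\psi_n\,d\nu\to\Prel$. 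The only external input is Kellerer's \emph{measure-theoretic} Lemma~1.8 (reproved in the appendix without any duality), used to upgrade the inequality $\varphi_n(x)+\psi_n(y)\le c(x,y)$ from ``$\pi$-a.e.\ for all $\pi\in\Pi(\mu,\nu)$'' to ``everywhere''. Kellerer's duality theorems (Corollaries~\ref{BoundedDuality} and~\ref{LSCDuality}) are then \emph{derived} as consequences.

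Your approach instead takes Kellerer's bounded-cost duality as a black box and builds a clean truncation-plus-Markov argument on top of it. This is shorter and conceptually transparent, but---as you correctly flag---it is circular within the paper's intended logical flow, where Theorem~\ref{1.2} is the foundational result. Your Markov-restriction construction is in fact close in spirit to the identity $P_{c\wedge h_n}\uparrow\Prel$ that the paper establishes later (Theorem~\ref{strong}), though there the implication runs the other way, using Theorem~\ref{1.2} as input rather than producing it. So what you have written is a valid alternative proof \emph{given} Kellerer, and a nice heuristic bridge between the truncated and relaxed primal problems, but it does not replace the paper's self-contained functional-analytic argument.
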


We observe that in (\ref{G9}) also the value $+\infty$ is possible.

\medskip

The theorem gives a positive result on the issue of duality in
the Monge--Kantorovich problem. 
Moreover we have $P=\Prel$ and therefore $P=D$ in any of the following cases.
\begin{enumerate}
    \item[(a)] $c$ is lower semi-continuous,
    \item[(b)] $c$ is uniformly bounded or, more generally,
    \item[(c)] $c$ is $\mu\otimes\nu$-a.s.\
finitely valued.
\end{enumerate}
Concerning (a) and (b), it is rather straight forward to check
that these assumptions imply $P=\Prel$ (see Corollaries
\ref{BoundedDuality} and \ref{LSCDuality} below). In particular,
the classical duality results of Kellerer quickly follow from
Theorem \ref{1.2}. To achieve that also property (c) is sufficient
seems to be more sophisticated and follows from \cite[Theorem
1]{BeSc08}. 

\medskip

A sufficient condition for attainment in the primal part of the Monge-Kantorovich transport problem is that the cost function $c$ is lower semi-continuous and we have nothing to add here. 

To analyze the same question concerning the dual problem we need some preparation: consider the following alternative definition of $\Prel$.
One may relax the transport costs by  cutting the maximal transport  costs. I.e.\ we could alter the cost function $c$ to $c\wedge M$ for some  $M\geq0$ or to $c\wedge h$ for some $\mu\otimes \nu$-a.s.\ finite, measurable function $h:X\times Y\to [0,\infty].$  If $M$ resp.\ $h$ is large this should have a similar effect as ignoring a small mass. Indeed we will establish that
\begin{align}\label{SecondPrel}
\lim_{n\to \infty} P_{c\wedge h_n} = \Prel_c
\end{align}
for any sequence of measurable functions $h_n:X\times Y\to [0,\infty)$ increasing (uniformly) to $\infty$.

In Theorem \ref{strong} below we then prove that we have dual attainment (in the sense of \cite[Section 1.1]{BeSc08}) if and only if there exists some finite measurable function $h:X\times Y\to [0,\infty)$ so that
\begin{align}\label{RightNow} P_{c\wedge h}= \Prel_c.\end{align}

\medskip

The paper is organized as follows.

In Section \ref{DualitySection} we show Theorem \ref{1.2}. The proof is self-contained with the exception of Lemma \ref{Lnull} which is a consequence of \cite[Lemma 1.8]{Kell84}. For the convenience of the reader we provide a derivation of Lemma \ref{Lnull} in the Appendix.

Section \ref{CorollarySection} deals with  consequences of Theorem \ref{1.2}. First we re-derive the classical duality results of Kellerer. Then we establish (with the help of \cite[Theorem 2]{BeSc08}) the alternative characterization of $\Prel $ given  in \eqref{SecondPrel} and the characterization of dual attainment via \eqref{RightNow}.

\section{The Proof of the Duality Theorem}\label{DualitySection}

The proof of Theorem \ref{1.2} relies on Fenchel's perturbation
technique. We refer to the accompanying paper \cite{BeLS09b} for a
didactic presentation of this technique: there we give an
elementary version of this argument, where $X=Y=\{1,\ldots ,N\}$
equipped with the uniform measure $\mu=\nu,$ in which case the
optimal transport problem  reduces to a finite linear
programming problem.

We start with an easy result showing that the relaxed version
\eqref{G7} of the optimal transport problem is not ``too
relaxed'', in the sense that the trivial implication of the minmax
theorem still holds true.

\begin{proposition}
Under the assumptions of Theorem \ref{1.2}. we have
$$\Prel \geq D.$$
\end{proposition}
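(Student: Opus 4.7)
My plan is to establish $D\leq \Prel$ by a straightforward weak-duality computation. Fix $(\varphi,\psi)\in\Psi(\mu,\nu)$; the goal is to show $\int_X\varphi\,d\mu+\int_Y\psi\,d\nu\leq \Prel$, and then to take the supremum over $\Psi(\mu,\nu)$. If $\Prel=+\infty$ there is nothing to prove, so I assume $\Prel<\infty$. By definitions of $\Prel$ and $P^\varepsilon$ I pick a sequence $\varepsilon_n\downarrow 0$ and $\pi_n\in\Pi^{\varepsilon_n}(\mu,\nu)$ with $\langle c,\pi_n\rangle\to \Prel$; passing to a tail, all $\langle c,\pi_n\rangle$ may be assumed finite.

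The central step is the identity
$$\int_X\varphi\,d\mu+\int_Y\psi\,d\nu \;=\; \int_{X\times Y}\!\bigl(\varphi(x)+\psi(y)\bigr)\,d\pi_n(x,y) \;+\; \int_X\varphi\,d(\mu-p_X\pi_n) \;+\; \int_Y\psi\,d(\nu-p_Y\pi_n),$$
which is legitimate because $p_X\pi_n\leq\mu$, $p_Y\pi_n\leq\nu$ and $\varphi\in L^1(\mu),\psi\in L^1(\nu)$ make every sub-integral finite. The admissibility $\varphi(x)+\psi(y)\leq c(x,y)$ then bounds the first summand by $\langle c,\pi_n\rangle$, which tends to $\Prel$.

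The remaining task is to show that both ``leftover'' terms vanish as $n\to\infty$. The mass constraint $\|\pi_n\|\geq 1-\varepsilon_n$ forces $\|\mu-p_X\pi_n\|\leq\varepsilon_n\to 0$. Writing $f_n:=d(\mu-p_X\pi_n)/d\mu$, one has $0\leq f_n\leq 1$ and $\int_X f_n\,d\mu\to 0$, so $f_n\to 0$ in $\mu$-measure. Dominated convergence with the integrable envelope $|\varphi|$ gives $\int_X\varphi\,d(\mu-p_X\pi_n)=\int_X f_n\varphi\,d\mu\to 0$, and the $\psi$-term is handled identically. Combining these limits with the decomposition above yields $\int\varphi\,d\mu+\int\psi\,d\nu\leq \Prel$, and the supremum over $\Psi(\mu,\nu)$ finishes the proof.

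I do not foresee any serious obstacle here; this is precisely the trivial half of duality, and the manipulation above is the standard recipe. The only points deserving a line of care are that $\varphi,\psi$ may take the value $-\infty$ on $\mu$- resp.\ $\nu$-null sets (which is harmless since $p_X\pi_n\leq\mu$ and $p_Y\pi_n\leq\nu$ ensure $\varphi(x)+\psi(y)$ is unambiguously defined $\pi_n$-a.e.) and that one should verify the six integrals in the decomposition really are finite, so that the linear rearrangement is justified without running into any $\infty-\infty$ indeterminacy.
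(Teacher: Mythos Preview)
Your proof is correct and follows essentially the same approach as the paper's. Both arguments fix an admissible pair $(\varphi,\psi)$, take an optimizing sequence $\pi_n$ for the relaxed problem with marginal densities tending to $\I$, use the pointwise inequality $\varphi(x)+\psi(y)\le c(x,y)$ to bound the $\pi_n$-integral by $\langle c,\pi_n\rangle$, and then invoke dominated convergence (with the integrable majorants $|\varphi|$, $|\psi|$) to handle the marginal terms; your decomposition of $\int\varphi\,d\mu+\int\psi\,d\nu$ into a $\pi_n$-part plus two leftover integrals is just an algebraic reformulation of the paper's direct estimate $\int c\,d\pi_n\ge\int\varphi f_n\,d\mu+\int\psi g_n\,d\nu$.
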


\begin{proof}
Let $(\varphi,\psi)$ be integrable Borel functions such that
\begin{align}
\label{G15a}
\varphi(x)+\psi(y)\le c(x,y), \qquad \mbox{for every} \ (x,y) \in X\times Y.
\end{align}

Let $\pi_n \in\Pi(f_n\mu, g_n\nu)$ be an optimizing sequence for
the relaxed problem, where $f_n \le \I, g_n \le \I$, and
$\pi_n(X\times Y) =\|f_n\|_{L^1 (\mu)}= \|g_n\|_{L^1 (\nu)}$ tends to
one. By passing to a subsequence we may assume that
$(f_n)^\infty_{n=1}$ and $(g_n)^\infty_{n=1}$ converge a.s.\ to
$\I$. We may estimate
$$\liminf_{n\to\infty} \int_{X\times Y} c\,d\pi_n \geq \liminf_{n\to\infty} \left[\int_{X} \varphi f_n \,d\mu +\int_Y \psi g_n\, d\nu\right]
= \int_X\varphi\, d\mu +\int_Y \psi\, d\nu,$$
where in the last equality we have used Lebesgue's theorem on dominated convergence.
\end{proof}

The next lemma is a technical result which will be needed in the formalization of the proof of Theorem \ref{1.2}.

\begin{lemma}\label{ByHahnBanach}\label{AlmostHB}
Let  $V$ be a normed vector space, $x_0\in V$, and let $\Phi:V\to (-\infty ,\infty]$ be a positively homogeneous\footnote{By positively homogeneous we mean $\Phi (\lambda x)=
\lambda \Phi (x)$, for $\lambda\geq 0$, with the convention $0\cdot\infty =0$.}
convex function such that
    $$\liminf_{\|x-x_0\|\to 0} \Phi(x)\geq \Phi(x_0).$$
If $\Phi(x_0) < \infty$ then, for each $\eps>0$,   there exists a
continuous linear functional $v:V\to \R$ such that
\begin{align*}
  \Phi(x_0)-\eps \leq
v(x_0) \mbox{ and } \Phi(x)\geq v(x),\ \mbox{for all $x\in V$} .
\end{align*}
If $\Phi(x_0) = \infty$ then, for each $M>0$,   there exists a
continuous linear functional $v:V\to \R$ such that
\begin{align*}
  M \leq
v(x_0) \mbox{ and } \Phi(x)\geq v(x),\ \mbox{for all $x\in V$} .
\end{align*}
\end{lemma}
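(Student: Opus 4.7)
The plan is to prove both statements uniformly by applying the geometric Hahn-Banach theorem to the epigraph $E := \{(x,t) \in V \times \R : \Phi(x) \leq t\}$. Positive homogeneity forces $\Phi(0)=0$, so $E$ is a convex cone in $V \times \R$ containing the origin (convex since $\Phi$ is convex, a cone since $\Phi$ is positively homogeneous). The lower semi-continuity hypothesis at $x_0$ is precisely what will allow us to pry an open convex set loose from $E$ containing the target point.

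First I would use the hypothesis to build an open convex set $B \subset V \times \R$ disjoint from $E$: when $\Phi(x_0) < \infty$ choose $\delta > 0$ so that $\Phi > \Phi(x_0) - \varepsilon/2$ on the open ball $B_V(x_0, \delta)$ and put $B := B_V(x_0, \delta) \times (-\infty, \Phi(x_0) - \varepsilon/2)$; when $\Phi(x_0) = \infty$ choose $\delta$ so that $\Phi > M + 1$ on $B_V(x_0, \delta)$ and put $B := B_V(x_0, \delta) \times (-\infty, M + 1/2)$. These contain the target points $(x_0, \Phi(x_0) - \varepsilon)$ and $(x_0, M)$ respectively, and in both cases $B \cap E = \emptyset$ by the choice of $\delta$. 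Hahn-Banach then furnishes a nonzero continuous linear functional $(L, \alpha) \in V^* \times \R$ together with $c \in \R$ satisfying $L(x) + \alpha t < c$ on $B$ and $L(x) + \alpha t \geq c$ on $E$. Sending $t \to +\infty$ inside $E$ gives $\alpha \geq 0$, and evaluating at $(0,0) \in E$ gives $c \leq 0$.

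The critical step is to argue $\alpha > 0$, and this is where the two cases diverge. When $\Phi(x_0) < \infty$ it is immediate: the two inequalities applied at $(x_0, \Phi(x_0)) \in E$ and at $(x_0, \Phi(x_0) - \varepsilon) \in B$ subtract to $\alpha \varepsilon > 0$. When $\Phi(x_0) = \infty$ this comparison is unavailable because $E$ has no point above $x_0$, and I expect this to be the main technical subtlety. The degenerate possibility $\alpha = 0$ forces $L$ to separate the ball $B_V(x_0, \delta)$ from $\{\Phi < \infty\}$ in $V$ alone, which in turn forces $\Phi \equiv +\infty$ on this ball; since $\{\Phi < \infty\}$ is a cone, positive scaling then yields $L \geq 0$ on $\{\Phi < \infty\}$, and $v$ can be manufactured by adding a large positive multiple of $-L$ to an auxiliary continuous linear minorant of $\Phi$ produced by applying the first case at a point of $\{\Phi < \infty\}$ where $\Phi$ is lower semi-continuous.

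Once $\alpha > 0$ is in hand, normalize $\alpha = 1$ and set $v := -L$. The inequality on $E$ reads $\Phi(x) \geq c + v(x)$; replacing $x$ by $\lambda x$ for $\lambda > 0$ and invoking positive homogeneity upgrades this to $\Phi(x) \geq c/\lambda + v(x)$, which passes to $\Phi \geq v$ on all of $V$ as $\lambda \to \infty$. The strict inequality on $B$ at the target point, combined with $c \leq 0$, finally gives $v(x_0) > \Phi(x_0) - \varepsilon - c \geq \Phi(x_0) - \varepsilon$ in the first case (respectively $v(x_0) > M - c \geq M$ in the second), completing the proof.
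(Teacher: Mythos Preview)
Your overall strategy coincides with the paper's: both separate the epigraph from a point below it by Hahn--Banach and then exploit positive homogeneity to drop the separating constant. Your treatment of the case $\Phi(x_0)<\infty$ is correct and essentially identical to the paper's argument (you use the open-set form of the separation theorem where the paper uses the point--closed-set form, but this is cosmetic).

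The gap is in your handling of the degenerate subcase $\alpha=0$ when $\Phi(x_0)=\infty$. You propose to manufacture $v$ as $v_1 - C L$ where $v_1$ is obtained by ``applying the first case at a point of $\{\Phi<\infty\}$ where $\Phi$ is lower semi-continuous''. But the lemma assumes lower semi-continuity \emph{only at} $x_0$, and nothing guarantees a point of $\{\Phi<\infty\}$ at which $\Phi$ is l.s.c. In fact, under the stated hypotheses alone the assertion seems to fail: take $V=c_{00}$ with the sup norm, let $\Phi(x)=-\sum_n x_n$ on the nonnegative cone and $+\infty$ elsewhere, and $x_0=-e_1$. Then $\Phi$ is convex, positively homogeneous, and (since the nonnegative cone is closed and $x_0$ lies at positive distance from it) l.s.c.\ at $x_0$ with $\Phi(x_0)=\infty$; yet any continuous linear $v\le\Phi$ would satisfy $v(e_n)\le -1$ for every $n$, which is incompatible with boundedness. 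So no continuous linear minorant exists, let alone one with $v(x_0)\ge M$.

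To be fair, the paper's ``proved analogously'' glosses over exactly the same issue. What rescues the application is that there one applies the lemma to the l.s.c.\ envelope $\overline\Phi$, which is nonnegative and l.s.c.\ everywhere; under either extra hypothesis your fix works (with $\Phi\ge 0$ one has l.s.c.\ at $0$ and may simply take $v_1=0$). You should flag this additional assumption explicitly rather than leave the existence of the auxiliary l.s.c.\ point unjustified.
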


\begin{proof} Assume first that $\Phi(x_0) < \infty$.
Let $K=\{(x,t):x\in V, t\geq \Phi(x)\}$ be the epigraph of $\Phi$
and $\overline K$ its closure in $V\times \mathbb{R}$. Since $\Phi
$ is assumed to be lower semi continuous at $x_0$, we have
$\inf\{t:(x_0,t)\in \overline K\}= \Phi(x_0)$, hence
$(x_0,\Phi(x_0)-\eps)\notin \overline K$. By Hahn-Banach, there is
a continuous linear functional $w\in V^*\times\mathbb{R}$ given by
$w(x,t)=u(x)+s t$ (where $u\in V^*$ and $s\in \R$) and $\beta\in
\R$ such that $w(x,t) > \beta$  for  $(x,t)\in \overline K$ and
$w(x_0,\Phi(x_0)-\eps)< \beta$. By the positive homogeneity of
$\Phi$, we have $\beta<0$, hence $s>0$. Also $u(x)+s \Phi(x)\geq
\beta$ and by applying positive homogeneity once more we see that
$\beta $ can be replaced by $0$. Hence we have
\begin{align*}
u(x)+s \Phi(x)\geq 0\quad u(x_0)+s (\Phi(x_0)-\eps)<0,
\end{align*}
so just let $v(x):= -u(x)/s.$
In the case $\Phi(x_0) =\infty$ the assertion is proved analogously.
\end{proof}

We now define the function $\Phi$ to which we shall apply the previous lemma.

Let $W=L^1(\mu)\times L^1(\nu)$ and $V$ the subspace of
co-dimension one, formed by the pairs $(f,g)$ such that $\int_X
f\,d\mu =\int_Yg\,d\nu$. By $V_+=\{(f,g)\in V: \ f\geq 0,\ g\geq
0\}$ we denote the positive orthant of $V$. For
$(f,g)\in V_+,$ we define, by slight abuse of notation, $\Pi
(f,g)$ as the set of non-negative Borel measures $\pi$ on $X\times
Y$ with marginals $f\mu$ and $g\nu$ respectively. With this
notation $\Pi(\I,\I)$ is just the set $\Pi(\mu,\nu)$ introduced
above. Define $ \Phi:  V_+ \longrightarrow [0,\infty] $ by
$$ \Phi(f,g)=\inf\left\{\int_{X\times Y} c (x,y) \,d\pi(x,y):\pi\in\Pi(f,g)\right\},\quad (f,g)\in V_+,$$
which is a convex function. 
By definition we have $\Phi(\I,\I)=P$, where $P$ is
the primal value of (\ref{G1}). Our matter of concern will be the
lower semi-continuity of the function $\Phi$ at the point
$(\I,\I)\in V_+$.

\begin{proposition}\label{Prel=PIffLSC}
Denote by $\overline \Phi:V\to[0,\infty]$ the lower
semi-continuous envelope of $\Phi$, i.e., the largest lower
semi-continuous function on $V$ dominated by $\Phi$ on $V_+$. Then
\begin{align}
\label{G18}
\overline\Phi(\I,\I)=\Prel.
\end{align}

Hence the function $\Phi$ is lower semi-continuous at $(\I,\I)$ if and only if $P=\Prel$.
\end{proposition}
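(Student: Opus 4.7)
The plan is to prove $\overline\Phi(\I,\I)=\Prel$ by establishing the two opposite inequalities. The final assertion of the proposition will follow at once: $\Phi$ is lower semi-continuous at $(\I,\I)$ iff $\overline\Phi(\I,\I)=\Phi(\I,\I)$, and by construction $\Phi(\I,\I)=P$.

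For the inequality $\overline\Phi(\I,\I)\leq\Prel$ I will produce an explicit recovery sequence. Pick approximately optimal relaxed plans $\pi_n\in\Pi^{\varepsilon_n}(\mu,\nu)$ with $\varepsilon_n\to 0$ and $\langle c,\pi_n\rangle\to\Prel$, and let $f_n:=dp_X(\pi_n)/d\mu$, $g_n:=dp_Y(\pi_n)/d\nu$, so that $0\leq f_n\leq\I$, $0\leq g_n\leq\I$ and $(f_n,g_n)\in V_+$. Since $\|f_n\|_{L^1(\mu)}=\|g_n\|_{L^1(\nu)}=\|\pi_n\|\to 1$ and $f_n\leq\I$, one has $\|\I-f_n\|_{L^1(\mu)}=1-\|f_n\|_{L^1(\mu)}\to 0$, and analogously for $g_n$. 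Thus $(f_n,g_n)\to(\I,\I)$ in $V$, and since $\pi_n\in\Pi(f_n\mu,g_n\nu)$ we get $\Phi(f_n,g_n)\leq\langle c,\pi_n\rangle$, hence $\overline\Phi(\I,\I)\leq\liminf\Phi(f_n,g_n)\leq\Prel$.

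The reverse inequality $\overline\Phi(\I,\I)\geq\Prel$ is the substantive direction. Take any $(f_n,g_n)\in V_+$ converging to $(\I,\I)$ in $V$ with $\Phi(f_n,g_n)<\infty$, and pick $\pi_n\in\Pi(f_n\mu,g_n\nu)$ with $\langle c,\pi_n\rangle\leq\Phi(f_n,g_n)+1/n$. I will trim $\pi_n$ down to some $\sigma_n\leq\pi_n$ belonging to $\Pi^{\varepsilon_n}(\mu,\nu)$ for $\varepsilon_n\to 0$ by a two-step scaling. First, set $\tilde\pi_n(dx,dy):=\tfrac{(f_n\wedge 1)(x)}{f_n(x)}\,\pi_n(dx,dy)$ with the convention $0/0:=0$; this is $\leq\pi_n$ and has $X$-marginal $(f_n\wedge\I)\mu\leq\mu$, and $Y$-marginal $g_n^{(1)}\nu$ with $g_n^{(1)}\leq g_n$. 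Second, set $\sigma_n(dx,dy):=\tfrac{(g_n^{(1)}\wedge 1)(y)}{g_n^{(1)}(y)}\,\tilde\pi_n(dx,dy)$; this has $Y$-marginal $(g_n^{(1)}\wedge\I)\nu\leq\nu$ and $X$-marginal still $\leq\mu$. The mass computes as
\[
\|\sigma_n\|=\int(g_n^{(1)}\wedge 1)\,d\nu=\int g_n^{(1)}\,d\nu-\int(g_n^{(1)}-1)^+\,d\nu\geq\int(f_n\wedge\I)\,d\mu-\|(g_n-\I)^+\|_{L^1(\nu)},
\]
and the right-hand side tends to $1-0=1$ because $f_n\to\I$ and $g_n\to\I$ in $L^1$. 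Therefore $\sigma_n\in\Pi^{\varepsilon_n}(\mu,\nu)$ with $\varepsilon_n\to 0$, and $P^{\varepsilon_n}\leq\langle c,\sigma_n\rangle\leq\langle c,\pi_n\rangle\leq\Phi(f_n,g_n)+1/n$. Passing to $\liminf$ yields $\Prel\leq\liminf\Phi(f_n,g_n)$, hence $\Prel\leq\overline\Phi(\I,\I)$.

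The main obstacle is precisely this second direction: an arbitrary $(f_n,g_n)\in V_+$ may exceed $\I$ on parts of $X$ or $Y$, so the associated $\pi_n$ is not a feasible partial plan for the relaxed problem. The two-step ``shave-off'' above resolves this, with the total mass loss controlled by the elementary identity $\int(h\wedge 1)\,d\lambda=\|h\|_{L^1(\lambda)}-\|(h-1)^+\|_{L^1(\lambda)}$ applied successively to the two marginals.
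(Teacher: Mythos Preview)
Your proof is correct and follows the same overall architecture as the paper: establish $\overline\Phi(\I,\I)\le\Prel$ via an optimizing sequence for the relaxed problem, and for the reverse inequality trim an arbitrary $\pi\in\Pi(f,g)$ (with $(f,g)$ close to $(\I,\I)$) down to a sub-measure with marginals $\le\mu,\le\nu$ and mass close to $1$.

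The only genuine difference is the trimming device. The paper does it in a single symmetric step, multiplying $\pi$ by the density $\big((1+|f(x)-1|)(1+|g(y)-1|)\big)^{-1}$, and then controls the resulting mass via Jensen's inequality applied to the convex function $F(a,b)=\frac{1}{(1+a)(1+b)}$, obtaining $\|\tilde\pi\|\ge(1-\gamma)^3$. You instead perform two successive one-sided scalings $\frac{f_n\wedge 1}{f_n}$ and $\frac{g_n^{(1)}\wedge 1}{g_n^{(1)}}$, and track the mass loss by the elementary identity $\int(h\wedge 1)=\|h\|_{L^1}-\|(h-1)^+\|_{L^1}$, arriving at $\|\sigma_n\|\ge 1-2\gamma$. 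Your route is slightly more elementary (no convexity argument needed) and even gives a sharper mass bound; the paper's route is more symmetric in $x$ and $y$. Both accomplish the same thing.
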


\begin{proof}
Let $(\pi_n)^\infty_{n=1}$ be an optimizing sequence for the
relaxed problem \eqref{G8}, i.e., a sequence of non-negative
measures on $X\times Y$ such that
$$\lim\limits_{n\to\infty} \int_{X\times Y} c(x,y)\,d\pi_n (x,y)=\Prel,$$
$$\lim\limits_{n\to\infty} \|\pi_n\| =\lim\limits_{n\to\infty} \int_{X\times Y}1 \, d\pi_n (x,y)=1,$$
and such that $p_X(\pi_n)\le\mu$ and $p_Y (\pi_n)\le\nu$. In
particular $p_X(\pi_n)=f_n\mu$ and $p_Y(\pi_n)=g_n\mu$ with
$(f_n)^\infty_{n=1}$ (resp.\ $(g_n)^\infty_{n=1}$) converging to
$\I$ in the norm of $L^1(\mu)$ (resp.\ $L^1(\nu)$). It follows
that
$$\overline\Phi(\I,\I)\le\lim\limits_{n\to\infty} \Phi (f_n,g_n)=\Prel.$$

To prove the reverse inequality $\overline \Phi(\I,\I)\geq \Prel$,
fix $\delta>0$. We have to show that for each $\eps >0$ there is
some $\tilde \pi\in \Pi^\eps(\mu,\nu)$ such that
\begin{align}\label{Desiree}\overline \Phi(\I,\I)+\delta \geq \int c\, d\tilde\pi.\end{align} Pick $\gamma\in (0,1)$ such that $(1-\gamma)^3\geq 1-\eps$. Pick $f, g$ and $\pi\in\Pi(f,g)$ such that $\|f-\I\|_{L^1(\mu)}, \|g-\I\|_{L^1(\nu)}<\gamma$ and $\overline \Phi(\I,\I)+\delta \geq \int c\, d\pi$. We note for later use that $\|\pi\|=\|f\|_{L^1(\mu)}= \|g\|_{L^1(\nu)}\in (1-\gamma, 1+\gamma).$
Define the Borel measure $\tilde \pi\ll\pi$ on $X\times Y$ by
\begin{align*}
\frac {d\tilde \pi }{d\pi}(x,y):=
\frac1{(1+|f(x)-1|)(1+|g(y)-1|)},
\end{align*}
and set $\tilde \mu:=p_X(\tilde\pi), \tilde \nu:=p_Y(\tilde \pi)$.
As $\frac {d\tilde \pi }{d\pi}\le1,$ we have  $\tilde \pi\le\pi$ so that \eqref{Desiree} is satisfied. Also $\tilde \mu\leq \mu$ and $\tilde \nu\leq \nu $. Thus it remains to check that $\|\tilde\pi\|\geq 1-\eps$.

The function $F(a,b)=\frac 1{(1+a)(1+b)}$ is convex on $[0,\infty)^2$
and by Jensen's inequality we have
\begin{align}
\|\tilde\pi\|&=\|\pi\|\int F(|f(x)-1|,|g(y)-1|)\,
\tfrac{d\pi(x,y)}{\|\pi\|} \ge \\
&\geq \|\pi\|
    F\left(\tfrac{\|f-\I\|_{L^1(\mu)}}{\|\pi\|},\tfrac{\|g-\I\|_{L^1(\nu)}}{\|\pi\|}\right)
    \ge(1-\gamma)\tfrac1{(1+\gamma/(1-\gamma))^2}\geq 1-\eps,
\end{align}
as required.

The final assertion of the proposition is now obvious.
\end{proof}

\begin{proof}[Proof of Theorem \ref{1.2}.]
By the preceding proposition we have to show that
$$\overline\Phi(\I,\I) =D,$$
where the dual value $D$ of the optimal transport problem is
defined in \eqref{SimpleJ}.

By Lemma \ref{AlmostHB} we know that there are
sequences\footnote{The dual space $V^*$ of the subspace $V$ of
$W=L^1(\mu)\times L^1(\nu)$ equals the quotient of the dual
$L^\infty(\mu) \times L^\infty(\nu)$, modulo the annihilator
of $V$, i.e.\  the one dimensional subspace formed by the
$(\varphi,\psi)\in L^\infty (\mu)\times L^\infty(\nu)$ of the form
$(\varphi,\psi)=(a,-a)$, for $a\in\mathbb{R}$.}
$(\varphi_n,\psi_n)^\infty_{n=1} \in W^* =L^\infty(\mu)\times
L^\infty(\nu)$ such that
$$\lim\limits_{n\to\infty} \langle(\varphi_n,\psi_n),(\I,\I)\rangle= \lim\limits_{n\to\infty} \left[\int_X \varphi_n\,d\mu +\int_Y\psi_n \,d\nu\right]=\overline\Phi(\I,\I)\in[0,\infty],$$
and such that
\begin{align}
\label{G21}
\langle(\varphi_n,\psi_n),(f,g)\rangle =\langle\varphi_n, f\rangle +\langle\psi_n,g\rangle\leq \Phi(f,g),\quad \mbox{for all} \ (f,g)\in V.
\end{align}
We shall show that (\ref{G21}) implies that, for each fixed
$n\in\N$, there are representants\footnote{Strictly speaking,
$(\varphi_n,\psi_n)$ are elements of $L^\infty(\mu)\times
L^\infty(\nu)$, i.e.\ {\it equivalence classes} of functions. The
$[-\infty,\infty[$-valued Borel measurable functions
$(\tilde\varphi_n,\tilde\psi_n)$ will be properly chosen
representants of these equivalence classes.} $(\tilde\varphi_n,
\tilde\psi_n)$ of  $(\varphi_n,\psi_n)$ such that
\begin{align}
\label{G21a}
\tilde\varphi_n(x)+\tilde\psi_n(y)\le c(x,y)
\end{align}
for {\it all} $(x,y)\in X\times Y$. Indeed, choose any
$\mathbb{R}$-valued representants $(\check\varphi_n,
\check\psi_n)$ of $(\varphi_n,\psi_n)$ and consider the set
\begin{align}
\label{G21b}
C=\{(x,y)\in X\times Y: \check\varphi_n(x)+\check\psi_n(y)>c(x,y)\}.
\end{align}
{\it Claim:} For every $\pi\in \Pi(\mu,\nu)$ we have that $\pi(C)=0$.

Indeed, fix $\pi\in \Pi(\mu,\nu)$ and denote by $(f,g)$ the
density functions of the projections $p_X(\pi_{|C})$ and
$p_Y(\pi_{|C})$. By (\ref{G21}) we have, for $n\geq 1$,
$$\int_{X\times Y} c\I_C\, d\pi \geq \Phi(f,g)\geq\langle\varphi_n,f\rangle+\langle\psi_n,g\rangle =\int_{X\times Y} (\check\varphi_n(x) +\check\psi_n(y))\I_C \,
d\pi(x,y)$$ By the definition of $C$ the first term above can only
be greater  than or equal to  the last term if $\pi(C)=0$, which
readily shows the above claim.

Now we are in a position to apply an innocent looking, but deep
result due to H.~Kellerer \cite[Lemma 1.8]{Kell84}\footnote{For the
convenience of the reader and in order to keep the present paper
self-contained, we provide in the appendix (Lemma \ref{Lnull}) a
proof of Kellerer's lemma, which is not relying on duality
arguments.}: a Borel set $C=X\times Y$ satisfies $\pi(C)=0$, for
each $\pi\in \Pi(\mu,\nu)$, if and only if there are Borel sets
$M\subseteq X, N\subseteq Y$ with $\mu(M)=\nu(N)=0$ such that
$C\subseteq (M\times Y) \cup (X\times N)$. Choosing such sets $M$
and $N$ for the set $C$ in \eqref{G21b}, define the representants
$(\tilde\varphi_n,\tilde\psi_n)$ by $\tilde\varphi_n=\check\varphi_n
\I_{X\backslash M} -\infty\I_M$ and
$\tilde\psi_n={\check\psi_n}\I_{Y\backslash N}-\infty \I_N$. We then
have $\tilde\varphi_n(x) +\tilde\psi_n(y)\le c(x,y)$, for {\it
every} $(x,y)\in X \times Y$. As
$$
    \lim\limits_{n\to\infty}\int_X \tilde\varphi_n \,d\mu +\int_Y\tilde\psi_n\, d\nu=
    \overline\Phi(\I,\I)=\Prel,
$$
the proof of Theorem \ref{1.2} is complete.
\end{proof}


\section{Consequences of the Duality Theorem}\label{CorollarySection}

Assume first that the Borel measurable cost function $c:X\times
Y\to [0,\infty]$ is $\mu\otimes \nu$-almost surely bounded by some constant\footnote{In fact, the same argument works
provided that $c(x,y)\leq f(x)+g(y)$ for integrable functions
$f,g$.} $M$. We then may estimate
$$P\le P^\eps +\eps M.$$

Indeed, for $\eps >0$, every partial transport plan $\pi^\eps$ with marginals $\mu^\eps \le\mu, \nu^\eps\le\nu$ and mass $\|\pi^\eps\|=1-\eps$ may be completed to a full
transport plan $\pi$ by letting, e.g.,
$$\pi=\pi^\eps+\eps^{-1}(\mu-\mu^\eps)\otimes(\nu-\nu^\eps).$$
As $c\le M$ we have $\int c\,d\pi\le\int c \,d\pi^\eps+\eps M$. This yields the following corollary due to H.~Kellerer \cite[Theorem 2.2]{Kell84}.

\begin{corollary}\label{BoundedDuality}
Let $X,Y$ be polish spaces equipped with Borel probability
measures $\mu, \nu$, and let $c:X\times Y\to [0,\infty]$ be a Borel
measurable cost function which is uniformly bounded. Then there is
no duality gap, i.e.\  $P=D$.
\end{corollary}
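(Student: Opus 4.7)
The plan is short: combine the inequality $P \le P^\eps + \eps M$ that is essentially established in the paragraph preceding the corollary statement with Theorem \ref{1.2}.

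First I would verify carefully that the completion $\pi := \pi^\eps + \eps^{-1}(\mu-\mu^\eps)\otimes(\nu-\nu^\eps)$ is a valid element of $\Pi(\mu,\nu)$. Its $X$-marginal is $\mu^\eps + (\mu-\mu^\eps) = \mu$, and symmetrically for $Y$; the rescaling by $\eps^{-1}$ makes sense because $\|\mu-\mu^\eps\| = \|\nu-\nu^\eps\| = \eps$, which I may assume without loss of generality by first cutting $\pi^\eps$ down to a sub-measure of mass exactly $1-\eps$ (this only decreases $\int c\,d\pi^\eps$). Using $c\le M$ everywhere, the product piece contributes at most $\eps M$ to the cost, so $\int c\, d\pi \le \int c\, d\pi^\eps + \eps M$. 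Taking the infimum over $\pi^\eps \in \Pi^\eps(\mu,\nu)$ yields $P \le P^\eps + \eps M$ for every $\eps>0$.

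Second, letting $\eps \to 0$ in this inequality, and using that $M<\infty$, gives $P \le \Prel$. Combined with the trivial bound $\Prel \le P$, this shows $P = \Prel$. An appeal to Theorem \ref{1.2} then gives $\Prel = D$, so chaining the equalities yields $P=D$, as required.

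There is really no obstacle here: the entire content of the corollary lies in the completion trick, which crucially uses that the missing mass $\eps$ can be redistributed at bounded cost. For the generalization indicated in the footnote (where $c(x,y) \le f(x)+g(y)$ for integrable $f,g$), I would replace the error $\eps M$ by $\int f\,d(\mu-\mu^\eps) + \int g\,d(\nu-\nu^\eps)$, plus an analogous term coming from the product measure $\eps^{-1}(\mu-\mu^\eps)\otimes(\nu-\nu^\eps)$; both vanish as $\eps \to 0$ by absolute continuity of the integral with respect to set mass, so the same scheme goes through unchanged.
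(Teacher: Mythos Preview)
Your argument is correct and is essentially identical to the paper's own proof: the paper also completes a partial plan $\pi^\eps$ of mass $1-\eps$ via $\pi=\pi^\eps+\eps^{-1}(\mu-\mu^\eps)\otimes(\nu-\nu^\eps)$, uses $c\le M$ to obtain $P\le P^\eps+\eps M$, and then lets $\eps\to 0$ together with Theorem~\ref{1.2}. Your added care about normalizing to mass exactly $1-\eps$ and your remark on the footnote generalization are fine elaborations but do not change the approach.
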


\medskip

To establish duality in the setup of a lower semi-continuous cost
function $c$, it suffices to note that in this setting also the
cost functional $\Phi$ is lower semi-continuous:

\begin{lemma}\label{lscOfCostFunctional}
 \cite[Lemma 4.3]{Vill09} Let $c:X\times Y\to [0,\infty] $ be lower semi-continuous and assume that a sequence of measures $\pi_n$ on $X\times Y$ converges to a transport plan $\pi\in \Pi(\mu,\nu)$ weakly, i.e.\ in the topology induced by the bounded continuous functions on $X\times Y$. Then
$$\int c\, d\pi\leq \liminf_{n\to \infty} \int c\, d\pi_n .$$
\end{lemma}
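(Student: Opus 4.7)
The plan is to approximate the non-negative lower semi-continuous cost $c$ from below by an increasing sequence of bounded continuous functions and then apply weak convergence to each one. Since $X\times Y$ is Polish, it admits a compatible metric $d$, and I would use the Moreau--Yosida type inf-convolutions
\[
c_k(x,y) := \inf_{(x',y') \in X\times Y}\bigl\{(c(x',y')\wedge k) + k\cdot d\bigl((x,y),(x',y')\bigr)\bigr\},
\]
which are $k$-Lipschitz, bounded by $k$, and (as I verify below) satisfy $c_k \uparrow c$ pointwise as $k\to\infty$. In particular each $c_k$ lies in $C_b(X\times Y)$.

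For every fixed $k$, weak convergence $\pi_n \to \pi$ together with $c_k \leq c$ gives
\[
\int c_k\,d\pi = \lim_{n\to\infty}\int c_k\,d\pi_n \leq \liminf_{n\to\infty}\int c\,d\pi_n.
\]
Letting $k\to\infty$ and invoking monotone convergence on the left hand side then yields $\int c\,d\pi \leq \liminf_n \int c\,d\pi_n$, which is the claim. Note that the assumption $\pi\in\Pi(\mu,\nu)$ is used only through the weak convergence hypothesis; the non-negativity of $c$ is what allows monotone convergence to be applied.

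The only step requiring care is the pointwise monotone convergence $c_k(x,y)\uparrow c(x,y)$, and this is precisely where lower semi-continuity enters. The inequality $c_k(x,y)\leq c(x,y)$ follows by choosing $(x',y')=(x,y)$ in the infimum. For the reverse direction, if $c(x,y)<\infty$, then given $\varepsilon>0$ lower semi-continuity supplies a radius $\delta>0$ such that $c(x',y')> c(x,y)-\varepsilon$ whenever $d((x,y),(x',y'))<\delta$; for $k$ large enough that $k\delta > c(x,y)$, any $(x',y')$ outside this $\delta$-ball contributes more than $c(x,y)$ to the infimum via the Lipschitz penalty, so $c_k(x,y)\geq (c(x,y)-\varepsilon)\wedge k$. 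The case $c(x,y)=\infty$ is handled analogously by replacing $c(x,y)-\varepsilon$ with an arbitrary $M$. I do not expect any serious obstacle; the whole argument is a textbook reduction to the bounded continuous case afforded by the definition of weak convergence of measures.
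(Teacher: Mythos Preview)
Your proof is correct and is the standard argument. The paper does not actually give its own proof of this lemma; it simply cites it from Villani \cite[Lemma 4.3]{Vill09}, and your Moreau--Yosida approximation followed by monotone convergence is precisely the approach used there.
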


\begin{corollary}\label{LSCDuality}\cite[Theorem 2.6]{Kell84} Let $X,Y$ be polish spaces equipped with Borel probability measures $\mu, \nu$, and let $c:X\times Y\to [0,\infty ]$ be a lower semi-continuous cost function. Then there is no duality gap, i.e.\  $P=D$.
\end{corollary}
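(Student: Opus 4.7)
The plan is to reduce the lower semi-continuous case to Theorem \ref{1.2} by showing that $P=\Prel$, which together with $\Prel=D$ gives the claim. The key tool is Lemma \ref{lscOfCostFunctional}, which provides lower semi-continuity of the cost functional against weakly converging measures.

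First, I would take an optimizing sequence $(\pi_n)_{n\ge 1}$ for the relaxed problem \eqref{G8}, i.e., non-negative Borel measures on $X\times Y$ with $p_X(\pi_n)\le \mu$, $p_Y(\pi_n)\le \nu$, $\|\pi_n\|\to 1$, and $\int c\,d\pi_n\to \Prel$. Since $\mu$ and $\nu$ are Borel probability measures on polish spaces they are tight, so given $\eta>0$ one can pick compact sets $K_X\subseteq X$ and $K_Y\subseteq Y$ with $\mu(X\setminus K_X),\nu(Y\setminus K_Y)<\eta/2$. The bound $\pi_n((K_X\times K_Y)^c)\le \mu(X\setminus K_X)+\nu(Y\setminus K_Y)<\eta$ then shows that $(\pi_n)$ is tight as a sequence of sub-probability measures, and Prokhorov's theorem furnishes a weakly convergent subsequence, still denoted $\pi_n$, with limit $\pi^*$.

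Next, I would verify that $\pi^*\in\Pi(\mu,\nu)$. Testing against the constant function $1\in C_b(X\times Y)$ gives $\|\pi^*\|=\lim\|\pi_n\|=1$. Testing against $f\circ p_X$ for $f\in C_b(X)$ shows that $p_X(\pi_n)\to p_X(\pi^*)$ weakly, and since the measures $\mu-p_X(\pi_n)$ are non-negative their weak limit $\mu-p_X(\pi^*)$ is also non-negative, so $p_X(\pi^*)\le \mu$. Combined with $\|p_X(\pi^*)\|=1=\|\mu\|$ this forces $p_X(\pi^*)=\mu$; symmetrically $p_Y(\pi^*)=\nu$.

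Now Lemma \ref{lscOfCostFunctional} applies and yields
\[
\int c\,d\pi^* \le \liminf_{n\to\infty}\int c\,d\pi_n = \Prel,
\]
so $P\le \int c\,d\pi^*\le \Prel$. Since the reverse inequality $\Prel\le P$ is trivial, $P=\Prel$, and invoking Theorem \ref{1.2} gives $P=\Prel=D$. No step here strikes me as a genuine obstacle: tightness and marginal identification are routine on polish spaces, and the possibility that $c$ takes the value $+\infty$ is absorbed without any truncation argument directly through the lower semi-continuity lemma.
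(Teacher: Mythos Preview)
Your proof is correct and follows essentially the same route as the paper: both use Prokhorov's theorem together with Lemma~\ref{lscOfCostFunctional} to establish $P=\Prel$, after which Theorem~\ref{1.2} yields $P=D$. The paper compresses this into the one-line observation that $\Phi$ is lower semi-continuous on $V_+$ (and then invokes Proposition~\ref{Prel=PIffLSC}), whereas you unpack the argument by working directly with an optimizing sequence for the relaxed problem and extracting a weak limit in $\Pi(\mu,\nu)$; the underlying compactness-plus-l.s.c.\ mechanism is identical.
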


\begin{proof}
It follows from Prokhorov's theorem and Lemma
\ref{lscOfCostFunctional} that the function $\Phi:V_+\to
[0,\infty]$ is lower semi-continuous with respect to the norm
topology of $V$.
\end{proof}

We turn now to the question under which assumptions there is dual attainment.

Easy examples show that one cannot expect that the  dual problem admits  \emph{integrable} maximizers
 unless the cost function satisfies certain integrability conditions with respect to $\mu$ and $\nu$ \cite[Examples 4.4, 4.5]{BeSc08}. In fact \cite[Example 4.5]{BeSc08} takes place in a very ``regular'' setting, where $c$ is squared Euclidean distance on $\R$. 
 In this case there exist natural candidates $(\phi, \psi)$ which, however, fail to be dual maximizers in the usual sense as they are not integrable. 

The following solution was proposed in \cite[Section 1.1]{BeSc08}.
If $\varphi$ and $\psi$ are integrable functions and
$\pi\in\Pi(\mu,\nu)$ then
\begin{align}\label{G29}
\int_X \varphi \,d\mu+\int_Y \psi\, d\nu=\int_{X\times Y} ( \varphi(x)+\psi(y))\, d\pi (x,y).
\end{align}
If we drop the integrability condition on
$\varphi$ and $\psi$, the left hand side need not make sense. But if we require that $\varphi(x)+\psi(y)\le
c(x,y)$ and if $\pi$ is a finite cost transport plan, i.e.\
$\int_{X\times Y} c\,d\pi <\infty$, then the right hand side of
(\ref{G29}) still makes good sense, assuming possibly the value $-\infty$,  and we set
\begin{align*}
J_c(\phi,\psi)=\int_{X\times Y} ( \varphi(x)+\psi(y))\, d\pi (x,y).
\end{align*}
It is not difficult to show (see \cite[Lemma 1.1]{BeSc08}) that this value does not depend on the choice of the finite
cost transport plan $\pi$ and satisfies $J_c(\phi,\psi)\leq D$. Under the assumption that there exists some finite transport plan $\pi\in\Pi(\mu,\nu)$ we then say that we have \emph{dual attainment} in the optimization problem \eqref{SimpleJ} if there exist Borel measurable functions $\hat \phi:X\to [-\infty, \infty)$ and $ \hat \psi: Y\to [-\infty, \infty)$ verifying $\phi(x)+\psi(y)\leq c(x,y)$, for $(x,y)\in X\times Y$, such that
\begin{align}\label{BetterJ}
&D=J_c(\pphi,\ppsi).
\end{align}

We recall a result established in \cite{BeSc08}, generalizing
Corollary \ref{BoundedDuality}. We remark that we do not know how
to directly deduce it from Theorem \ref{1.2}.
\begin{theorem}\label{VonUns}\cite[Theorems 1 and 2]{BeSc08}
Let $X,Y$ be polish spaces equipped with Borel probability
measures $\mu, \nu$, and let $c:X\times Y\to [0,\infty]$ be a Borel
measurable cost function such that $\mu\otimes
\nu(\{(x,y):c(x,y)=\infty\})=0$. Then there is no duality gap,
i.e.\  $P=D$. Moreover there exist Borel measurable functions $\phi:X\to [-\infty,\infty), \psi:Y\to [-\infty, \infty)$ so that $\phi(x)+\psi(y) \leq c(x,y)$ for all $x\in X, y\in Y$ and $J_c(\phi, \psi)=D$.
\end{theorem}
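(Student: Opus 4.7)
\medskip
\noindent\textbf{Proof plan.} The strategy is to reduce to Theorem~\ref{1.2} by bridging the gap between the classical and relaxed primal problems under the finiteness hypothesis. The two claims---no duality gap and dual attainment---are handled in turn.

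First, I would show $P = \Prel_c$; Theorem~\ref{1.2} then immediately yields $P = \Prel_c = D$. Since $\Prel_c \leq P$ always holds, only $P \leq \Prel_c$ needs work. Given an $\eps$-optimal partial plan $\pi \in \Pi^\eps(\mu,\nu)$, write $\alpha = \mu - p_X(\pi)$ and $\beta = \nu - p_Y(\pi)$: these are nonnegative measures of mass at most $\eps$, and the task is to complete $\pi$ to an element of $\Pi(\mu,\nu)$ by some sub-transport $\sigma$ with marginals $\alpha,\beta$ whose cost $\int c\, d\sigma$ tends to $0$ as $\eps \to 0$. The finiteness assumption provides Borel sets $F_M = \{c \leq M\}$ with $\mu\otimes\nu(F_M) \to 1$ as $M \to \infty$; the plan is to produce $\sigma$ by iteratively thinning $\pi$ (giving back a small amount of already-achieved cheap transport) and compensating by sub-transports concentrated on $F_M$ for a carefully chosen sequence $M_n \to \infty$, balancing the sacrificed cost against the controlled cost of the bounded-on-$F_M$ completion.

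Next, for dual attainment, I would extract a maximizer from approximate dual pairs. The proof of Theorem~\ref{1.2} produces Borel functions $(\tphi_n, \tpsi_n)$ with $\tphi_n(x) + \tpsi_n(y) \leq c(x,y)$ everywhere and $\int \tphi_n\, d\mu + \int \tpsi_n\, d\nu \to D$. Fix a finite-cost primal plan $\pi^*$, which exists by the first part provided $P < \infty$; then each dual sum also equals $J_c(\tphi_n, \tpsi_n) = \int (\tphi_n(x) + \tpsi_n(y))\, d\pi^*$. A Koml\'os-type passage to convex combinations of $s_n(x,y) := \tphi_n(x) + \tpsi_n(y)$ in $L^1(\pi^*)$ should yield $\pi^*$-a.s.\ convergence to a limit $s_\infty \leq c$. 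One then splits $s_\infty$ into one-variable candidates $(\pphi_0, \ppsi_0)$ on the projections of $\supp \pi^*$ and extends these to Borel $(\pphi, \ppsi)$ on $X \times Y$ by a $c$-conjugation step which preserves the pointwise inequality and delivers $J_c(\pphi, \ppsi) = D$.

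The principal obstacle is the first step. The authors themselves remark that they do not know how to deduce this theorem directly from Theorem~\ref{1.2}, signaling that inexpensively filling the deficit is a genuinely nontrivial task that must draw on the finiteness hypothesis in a way not captured by the abstract duality argument of Section~\ref{DualitySection}. A secondary difficulty is carrying out the splitting and $c$-conjugation in the attainment step Borel-measurably, and verifying that $J_c(\pphi, \ppsi) = D$ survives the passage to the limit even when $\pphi$ and $\ppsi$ may take the value $-\infty$ on significant sets.
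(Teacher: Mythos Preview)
The paper does not prove this theorem at all: it is quoted from \cite{BeSc08} and used as a black box. In fact, the authors explicitly write that they ``do not know how to directly deduce it from Theorem~\ref{1.2}.'' So there is no proof in the paper to compare against, and your proposal is precisely an attempt to do what the authors say they cannot.

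Your plan has two genuine gaps, which you partly acknowledge. For the first step ($P=\Prel$ under the a.s.\ finiteness hypothesis), the completion argument is not carried out. While $\alpha=\mu-p_X(\pi)$ and $\beta=\nu-p_Y(\pi)$ are absolutely continuous with respect to $\mu,\nu$, so that $\alpha\otimes\beta(\{c=\infty\})=0$, there is no reason $\int c\,d(\alpha\otimes\beta)/\|\alpha\|$ should be small or even finite; the densities $d\alpha/d\mu$, $d\beta/d\nu$ may be concentrated where $c$ is huge. Your ``iterative thinning'' idea is suggestive but not an argument: you would need to show that one can always give back cheap transport and re-route through $F_M$ without the sacrificed cost accumulating, and this balancing is exactly the obstacle the authors flag. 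For the attainment step, the Koml\'os limit $s_\infty(x,y)$ of the sums $\tphi_n(x)+\tpsi_n(y)$ exists only $\pi^*$-a.e., and nothing guarantees it splits as $\pphi_0(x)+\ppsi_0(y)$: a $\pi^*$-a.e.\ limit of separated functions need not be separated. The actual proof in \cite{BeSc08} handles this via a delicate $c$-convexification and truncation scheme that controls the individual functions, not just their sum.

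In short: this is not a proof but a plausible-sounding outline whose hard steps are the whole content of the cited paper, and the present paper offers no alternative route.
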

Using Theorem \ref{VonUns} we now obtain the alternative description  of $\Prel$ and the characterization of dual attainment mentioned in the introduction.
\begin{theorem}\label{strong}
Let $X, Y$ be polish spaces, equipped with Borel probability
measures $\mu,\nu$, let $c:X\times Y\to [0 ,\infty]$ be Borel
measurable and assume that there exists a finite transport plan.
For every sequence of measurable functions $ h_n:X\times Y\to [0,\infty]$, satisfying $h_n\uparrow \infty$ uniformly\footnote{By saying that $h_n$ increases to $\infty$ \emph{uniformly}, we mean that for $n$ large enough, $h_n\geq m$ for every given constant $m\in[0,\infty)$. Indeed it is crucial to insist on this strong type of convergence: one may easily construct examples where $h_n(x,y)\uparrow \infty$ for all $(x,y)\in X\times Y$ while $P_{h_n}=0$ for every $n\in \N$.} and where each $h_n$ is $\mu\otimes \nu$-a.s. finitely valued,
we have
\begin{align}\label{Prel2}
  P_{c\wedge h_n} \uparrow \Prel.
\end{align}
Moreover, the following are equivalent.
\begin{enumerate}
\item[(i)] There is dual attainment, i.e.\ there exist measurable
functions $\phi, \psi$ such that $\phi(x)+\psi(y)\leq c(x,y) $ for
$x\in X,y\in Y$ and  $\Prel=D=J_c(\phi,\psi)$. \item[(ii)] There
exists a $\mu\otimes \nu$-a.s.\ finite function $h:X\times Y \to
[0,\infty]$ such that $\Prel= P_{c\wedge h}$.
\end{enumerate}
\end{theorem}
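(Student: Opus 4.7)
The plan is to prove \eqref{Prel2} first, then use Theorem \ref{VonUns} to settle (i)$\Leftrightarrow$(ii). Monotonicity of $n\mapsto P_{c\wedge h_n}$ is immediate since $c\wedge h_n$ is increasing. For the upper bound $P_{c\wedge h_n}\le \Prel$, note that $c\wedge h_n$ is itself $\mu\otimes\nu$-a.s.\ finite, so Theorem \ref{VonUns} gives $P_{c\wedge h_n}=D_{c\wedge h_n}$; since $c\wedge h_n\le c$ pointwise, every pair admissible for the dual of $c\wedge h_n$ is admissible for the dual of $c$, hence $D_{c\wedge h_n}\le D_c=\Prel$ by Theorem \ref{1.2}. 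For the reverse inequality I would choose near-optimal plans $\pi_n\in\Pi(\mu,\nu)$ with $\int c\wedge h_n\,d\pi_n\le P_{c\wedge h_n}+1/n$ and form the partial plan $\pi_n':=\pi_n|_{\{c\le h_n\}}$, whose cost is at most $P_{c\wedge h_n}+1/n$. The uniform divergence of $h_n$ means that for every $m\in[0,\infty)$ one has $h_n\ge m$ for $n$ large, so
\[
m\cdot \pi_n(\{c>h_n\})\le \int_{\{c>h_n\}} h_n\,d\pi_n\le P_{c\wedge h_n}+1/n.
\]
The main obstacle lies exactly here: assuming first $\Prel<\infty$, letting $m$ be arbitrarily large forces $\|\pi_n'\|\to 1$, so $\pi_n'\in \Pi^{\eps_n}(\mu,\nu)$ with $\eps_n\to 0$ and $\pi_n'$ witnesses $\Prel\le \liminf_n P_{c\wedge h_n}$ via monotonicity of $\eps\mapsto P^\eps$; the case $\Prel=\infty$ is then dispatched by contradiction, since a bounded $\liminf_n P_{c\wedge h_n}$ would repeat the previous argument and force $\Prel<\infty$.

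For (ii)$\Rightarrow$(i), I would apply Theorem \ref{VonUns} directly to the $\mu\otimes\nu$-a.s.-finite cost $c\wedge h$: a finite $c$-cost plan $\pi^*$ exists by hypothesis and satisfies $\int c\wedge h\,d\pi^*<\infty$, so Theorem \ref{VonUns} yields Borel $\tphi,\tpsi$ with $\tphi(x)+\tpsi(y)\le c\wedge h(x,y)\le c(x,y)$ and $J_{c\wedge h}(\tphi,\tpsi)=D_{c\wedge h}=P_{c\wedge h}=\Prel$. Since $\pi^*$ has finite cost for both $c$ and $c\wedge h$, the plan-independence of $J$ (\cite[Lemma 1.1]{BeSc08}) gives $J_c(\tphi,\tpsi)=J_{c\wedge h}(\tphi,\tpsi)=\Prel=D$, so $(\tphi,\tpsi)$ is a dual maximizer for $c$ in the sense of \eqref{BetterJ}.

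For (i)$\Rightarrow$(ii), given dual maximizers $\pphi,\ppsi$ provided by (i), set $A=\{\pphi>-\infty\}$ and $B=\{\ppsi>-\infty\}$, both of full measure, and define
\[
h(x,y):=\begin{cases} \pphi(x)^+ +\ppsi(y)^+ +1, & (x,y)\in A\times B,\\ +\infty, & \text{otherwise.}\end{cases}
\]
Then $h$ is Borel and $\mu\otimes\nu$-a.s.\ finite, and $\pphi(x)+\ppsi(y)\le h(x,y)$ holds pointwise (trivially off $A\times B$; from $\pphi\le\pphi^+,\,\ppsi\le\ppsi^+$ on $A\times B$). Together with the admissibility $\pphi+\ppsi\le c$, this yields $\pphi+\ppsi\le c\wedge h$ on all of $X\times Y$. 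Evaluating $J_{c\wedge h}(\pphi,\ppsi)$ against the finite $c$-cost plan $\pi^*$ (which has finite $c\wedge h$-cost as well) gives $J_{c\wedge h}(\pphi,\ppsi)=J_c(\pphi,\ppsi)=\Prel$, whence $P_{c\wedge h}=D_{c\wedge h}\ge J_{c\wedge h}(\pphi,\ppsi)=\Prel$; combined with the upper bound already obtained in the proof of \eqref{Prel2}, $P_{c\wedge h}=\Prel$.
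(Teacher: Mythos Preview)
Your proof is correct; the differences from the paper's argument are worth noting.

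For the upper bound $P_{c\wedge h_n}\le \Prel$ and for (ii)$\Rightarrow$(i) you proceed essentially as the paper does, via Theorem~\ref{VonUns} applied to the $\mu\otimes\nu$-a.s.\ finite cost $c\wedge h_n$ (resp.\ $c\wedge h$).

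For the lower bound $\lim_n P_{c\wedge h_n}\ge \Prel$ you take a genuinely different route. The paper argues on the \emph{dual} side: given $\eta>0$ it picks $(\phi,\psi)\in\Psi(\mu,\nu)$ with $\int\phi\,d\mu+\int\psi\,d\nu>\Prel-\eta$, truncates to obtain a pair bounded by some constant $M$, and then uses $h_n\ge 2M$ for large $n$ to conclude that the truncated pair is admissible for $c\wedge h_n$, whence $P_{c\wedge h_n}\ge D_{c\wedge h_n}>\Prel-\eta$. You argue on the \emph{primal} side instead, restricting a near-optimal plan $\pi_n$ for $c\wedge h_n$ to the set $\{c\le h_n\}$ and using the Markov-type estimate $m\,\pi_n(\{c>h_n\})\le \int_{\{c>h_n\}}(c\wedge h_n)\,d\pi_n$ together with the uniform divergence of $h_n$ to produce partial transports of mass tending to $1$. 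This is a nice alternative: it connects the truncated problem directly to the definition of $\Prel$ through $\Pi^\eps(\mu,\nu)$ and avoids any manipulation of dual functions. The paper's dual route, on the other hand, has the advantage of making the role of the \emph{uniform} divergence hypothesis on $h_n$ completely transparent (it enters only through $h_n\ge 2M$).

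For (i)$\Rightarrow$(ii) your construction is correct but more elaborate than necessary: the paper simply sets $h(x,y):=\big(\pphi(x)+\ppsi(y)\big)_+$, which is everywhere $[0,\infty)$-valued (since $\pphi,\ppsi$ take values in $[-\infty,\infty)$) and satisfies $\pphi+\ppsi\le h$ pointwise without any need to single out the sets where $\pphi,\ppsi$ are finite.
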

\begin{proof}
Fix $(h_n)_{n\geq 0}$ as in the Statement of the Theorem.
To prove \eqref{Prel2}, note  that by  Theorem \ref{VonUns} there
exist, for each $n$, measurable functions $\phi_n:X\to
[-\infty,\infty), \psi_n:Y\to [-\infty,\infty)$ satisfying
$\phi_n(x)+\psi_n(y)\leq c(x,y)$ for all $x\in X,y\in Y$ so that
$$J_{c}(\phi_n,\psi_n)=P_{c\wedge h_n}.$$ Thus $P_{c\wedge h_n} \leq \Prel$ for
each $n$. To see that $\lim_{n\to \infty} P_{c\wedge h_n}\geq
\Prel$, fix $\eta>0$. As $D=\Prel$ there exists $(\phi,\psi)\in
\Psi(\mu,\nu)$ so that $J(\phi, \psi)>\Prel-\eta$. Note that for,
$M\geq 0$, the pair of functions  $(M\wedge (-M\vee \phi)),
M\wedge (-M\vee \psi))$ lies in $ \Psi(\mu,\nu)$. Hence we may
assume without loss of generality that $|\phi|$ and $ |\psi|$ are
uniformly bounded by some constant $M$. Pick $n$ so that
$h_n(x,y)\geq 2 M$ for all $x\in X, y\in Y$. It then follows that
$c\wedge h_n(x,y) \geq \phi(x)+\psi(y)$ for all $x\in X, y\in Y$,
hence
$$P_{c\wedge h_n}\geq J(\phi,\psi)>\Prel-\eta,$$
which shows \eqref{Prel2}.

To prove that (ii) implies  (i), apply Theorem \ref{VonUns}  to
the cost function $c\wedge h$ to obtain  functions $\phi $ and
$\psi$ satisfying $\phi(x)+ \psi(y)\leq (c\wedge h)(x,y)$ and
$J_{c\wedge h}(\phi,\psi)= P_{c\wedge h}$. Then
$J_c(\phi,\psi)=P_{c\wedge h}=\Prel=D$, hence $(\phi,\psi)$ is a
pair of dual maximizers.

To see that (i) implies (ii),  pick dual maximizers $\phi, \psi$ and set $h(x,y):=\big(\phi(x)+\psi(y)\big)_+.$
\end{proof}

We close this section with a comment concerning a possible relaxed version of the dual problem.
\begin{remark}
 Define
\begin{align} 
\label{G25a} \Drel:=\sup \left\{\int \varphi \, d\mu+ \int \psi\,
d\nu:
\begin{array}{l}
 \varphi, \psi \mbox{ integrable},\\ \varphi(x)+\psi(y)\leq c(x,y) \ \pi\mbox{-a.e.}\\
 \mbox{for every finite cost } \pi\in \Pi(\mu,\nu)
 \end{array}
 \right\}\geq D
\end{align}
where $\pi\in\Pi(\mu,\nu)$ has finite cost if $\int_{X\times Y}
c\,d\pi <\infty$. It is straightforward to verify that we still
have $\Drel\le P$. One might conjecture (and the present authors
did so for some time) that, similarly to the situation in Theorem
\ref{1.2}, duality in the form $\Drel=P$ holds without any
additional assumption. For instance this is the case in Example
\ref{ZeroOneInfty} and combining the methods of \cite{BGMS08} and
\cite{BeSc08} one may prove that $\Drel=P$ provided that the Borel
measurable cost function $c:X\times Y\to [0,\infty]$ satisfies
that the set $\{c=\infty\}$ is \emph{closed} in the product
topology of $X\times Y.$ However a rather complicated example constructed in
\cite[Section 4]{BeLS09b} shows that under the assumptions of
Theorem \ref{1.2} it may happen that $\Drel$ is strictly smaller
then $P$, i.e.\ that there still is a duality gap.
\end{remark}

\appendix
\section{Appendix}

In our proof of Theorem  \ref{1.2} we made use of the following innocent looking result due H.~Kellerer:

\begin{lemma}\label{Lnull}
Let $X,Y$ be polish spaces equipped with Borel probability
measures $\mu, \nu$, let $L\subseteq X\times Y$ be a Borel set and
assume that $\pi(L)=0$ for any $\pi\in \Pi(\mu,\nu).$ Then there
exist sets $M\subseteq X, N\subseteq Y$ such that
$\mu(M)=\nu(N)=0$ and $L\subseteq M\times Y\cup X\times N$.
\end{lemma}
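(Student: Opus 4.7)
The plan is to prove the contrapositive: if $L$ is not covered by $M \times Y \cup X \times N$ for any Borel sets $M \subseteq X$, $N \subseteq Y$ with $\mu(M) = \nu(N) = 0$, then some $\pi \in \Pi(\mu,\nu)$ has $\pi(L) > 0$. The strategy is a measure-theoretic ``max-flow / min-cut'' argument. Consider the family
\begin{align*}
\mathcal{R} := \{\rho \in \mathcal{M}_+(X \times Y) : \rho((X\times Y)\setminus L) = 0,\ p_X(\rho) \le \mu,\ p_Y(\rho) \le \nu\}.
\end{align*}
Any $\rho \in \mathcal{R}$ extends to a coupling $\pi \in \Pi(\mu,\nu)$ by adjoining any coupling of the residual marginals $\mu - p_X(\rho)$ and $\nu - p_Y(\rho)$ (both non-negative measures of equal mass $1 - \|\rho\|$); then $\pi(L) \ge \|\rho\|$. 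Thus it suffices to construct a non-zero element of $\mathcal{R}$.

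To do so, I would use measurable selection. By the measurable projection theorem, $p_X(L)$ is analytic (hence universally measurable), and the non-covering hypothesis forces $\mu^*(p_X(L)) > 0$ (else $L \subseteq (M \times Y) \cup (X \times \emptyset)$ for a Borel envelope $M$ of $p_X(L)$ with $\mu(M) = 0$). The Jankov--von Neumann theorem then produces a universally measurable section $\sigma : p_X(L) \to Y$ with $(x, \sigma(x)) \in L$, and Lusin's theorem yields a Borel $A_0 \subseteq p_X(L)$ with $\mu(A_0) > 0$ on which $\sigma$ is Borel. Setting $\rho_0 := (\mathrm{id}, \sigma)_*(\mu|_{A_0})$ gives a Borel measure supported on the graph of $\sigma$ (and hence on $L$), with $p_X(\rho_0) = \mu|_{A_0} \le \mu$. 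The remaining difficulty is that $p_Y(\rho_0) = \sigma_*(\mu|_{A_0})$ need not be dominated by $\nu$. To remedy this, one decomposes $\sigma_*(\mu|_{A_0})$ against $\nu$ via Radon--Nikodym and restricts $A_0$ to $A := A_0 \cap \sigma^{-1}(B)$, where $B$ is the Borel subset of $Y$ on which the density $d\sigma_*(\mu|_{A_0})/d\nu$ is at most $1$; then $\rho := (\mathrm{id}, \sigma)_*(\mu|_A)$ lies in $\mathcal{R}$.

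The main obstacle is to guarantee that this restricted $A$ still satisfies $\mu(A) > 0$. A single refinement may fail, so one iterates: on the ``bad'' part $A_0 \setminus A$, $\sigma$ targets an overloaded region of $Y$, and one re-selects $\sigma$ there by re-applying Jankov--von Neumann to the slice of $L$ lying outside this region, invoking the non-covering hypothesis at each stage to keep the new domain non-negligible. A standard exhaustion (countable or transfinite) in which, at each stage, the ``used up'' portions of $X$ and $Y$ are accumulated, terminates either with the desired non-zero $\rho \in \mathcal{R}$ or with Borel null sets $M$ and $N$ whose union of slabs covers $L$---and the latter alternative contradicts the hypothesis. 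The essential technical ingredients---measurable projection, Jankov--von Neumann selection, Lusin's theorem, and Radon--Nikodym decomposition---are all available in the Polish setting and keep the entire construction Borel.
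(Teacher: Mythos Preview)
Your reduction to finding a non-zero element of $\mathcal{R}$ is correct, but the section-based construction does not work: measures of the form $(\mathrm{id},\sigma)_*(\mu|_A)$ are too rigid, since they carry full $\mu$-mass on $A$, whereas a non-zero $\rho\in\mathcal{R}$ may require a genuine density strictly below~$1$.

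Here is a concrete obstruction. Take $X=Y=[0,1]$, $\mu=\nu=\lambda$, and let $L=\{(x,x/2):x\in[0,1]\}$ be the graph of $f(x)=x/2$. This $L$ is not covered by null slabs (if $L\subseteq(M\times Y)\cup(X\times N)$ with $\lambda(M)=0$ then $f([0,1]\setminus M)\subseteq N$, forcing $\lambda(N)\ge 1/2$), and indeed $\tfrac12(\mathrm{id},f)_*\lambda$ is a non-zero element of $\mathcal{R}$. But the \emph{only} section of $L$ is $f$ itself, and for every Borel $A_0$ with $\lambda(A_0)>0$ the $Y$-marginal $f_*(\lambda|_{A_0})$ has density exactly $2$ on $f(A_0)$ and $0$ elsewhere. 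Your set $B=\{df_*(\lambda|_{A_0})/d\lambda\le 1\}$ is therefore the complement of $f(A_0)$, so $A_0\cap f^{-1}(B)=\emptyset$ and the first step yields nothing. Re-selection is impossible since no other section exists, and the overloaded region $f(A_0)\subseteq[0,1/2]$ is not $\nu$-null, so the claimed dichotomy---a non-zero $\rho$ or null covering slabs---simply does not materialize. For a variant in which re-selection \emph{is} available but still fails, take $L=\{(x,x/2):x\in[0,1]\}\cup\{(x,1/2+x/2):x\in[0,1]\}$: every Borel section pushes $\lambda$ to a measure of density $2$ on its image, and your iteration oscillates between the two branches without ever accumulating mass.

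The paper's argument avoids sections altogether. It reduces to compact $L$ via Choquet's capacitability theorem (applied to an auxiliary capacity comparable to the slab-covering number $m(L)=\inf\{\mu(A)+\nu(B):L\subseteq(A\times Y)\cup(X\times B)\}$), and for compact $K$ with $m(K)>0$ it builds approximating measures on a $1/n$ grid by a greedy finite selection, then passes to a weak limit. The discrete step spreads mass across many grid cells rather than forcing it through a single-valued map, which is exactly the flexibility your construction is missing.
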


Lemma \ref{Lnull} seems quite intuitive and, as we shall presently see, its proof is quite natural provided that the set $L$
is \emph{compact}. However the general case is delicate and relies
on relatively involved results from measure theory. H.~Kellerer
proceeded as follows. First he established various sophisticated
duality results. Lemma \ref{Lnull} is then a consequence of the
fact that there is no duality gap in the case when the Borel measurable cost
function $c$ is uniformly bounded (Corollary \ref{BoundedDuality}). To
make the present paper more self-contained, we provide a direct proof of
Lemma \ref{Lnull} which does not rely on duality results. Still, most ideas of the subsequent proof
are, at least implicitly, contained in the work of H.~Kellerer.

\medskip

Some steps in the proof of Lemma \ref{Lnull} are (notationally)
simpler in the case when $(X,\mu)=(Y,\nu)=([0,1],\lambda)$,
therefore we bring a short argument which shows that it is
legitimate to make this additional assumption.

Indeed it is rather obvious that one may reduce to the case that
the measure spaces $X$ and $Y$ are free of atoms. A  well known
result of measure theory (see for instance \cite[Theorem
17.41]{Kech95}) asserts that for any polish space $Z$ equipped
with a continuous Borel probability measure $\sigma$, there exists
a measure preserving Borel isomorphism  between the spaces
$(Z,\sigma)$ and $([0,1],\lambda)$. Thus there exist bijections
$f:X\to [0,1], g:Y\to [0,1]$ which are measurable with measurable
inverse, such that $f_\#\mu=g_\#\nu=\lambda.$ Hence it is
sufficient to consider the case $(X,\mu)=(Y,\nu)=([0,1],\lambda)$
and we will do so from now on.

\medskip

For a measurable set $L\subseteq [0,1]^2$ we define the functional
$$m(L):=\inf\{\lambda (A)+\lambda(B): L\subseteq A\times Y\cup X\times B\}.$$
Our strategy  is to show that under the assumptions of Lemma
\ref{Lnull}, we have that $m(L)=0$. This implies Lemma \ref{Lnull}
 since we have the following result.
\begin{lemma}\label{EasyNotTrivial}
Let $L\subseteq X\times Y$ be a Borel set with $m(L) =0$. Then there exist sets $M\subseteq X, N\subseteq Y$ such that $\mu(M)=\nu(N)=0$ and $L\subseteq M\times Y\cup X\times N$.
\end{lemma}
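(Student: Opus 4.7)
The plan is to use the definition of $m(L)=0$ to produce, for each $n\in\N$, a witnessing pair of Borel sets $(A_n,B_n)$ with $L\subseteq(A_n\times Y)\cup(X\times B_n)$ and $\mu(A_n)+\nu(B_n)<2^{-n}$, and then pass to a limit superior so as to apply a Borel--Cantelli argument.

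Concretely, I would first appeal to the definition of $m$ to select, for each $n$, Borel sets $A_n\subseteq X$ and $B_n\subseteq Y$ with $L\subseteq(A_n\times Y)\cup(X\times B_n)$ and $\mu(A_n)+\nu(B_n)<2^{-n}$. Then set
\[
M:=\limsup_{n\to\infty}A_n=\bigcap_{k\geq 1}\bigcup_{n\geq k}A_n,\qquad N:=\limsup_{n\to\infty}B_n=\bigcap_{k\geq 1}\bigcup_{n\geq k}B_n.
\]
Since $\sum_n\mu(A_n)<\infty$ and $\sum_n\nu(B_n)<\infty$, the Borel--Cantelli lemma immediately gives $\mu(M)=\nu(N)=0$.

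It remains to check the covering inclusion. Fix $(x,y)\in L$ and suppose $x\notin M$. Then there is some $k$ with $x\notin A_n$ for every $n\geq k$. For each such $n$, the inclusion $L\subseteq(A_n\times Y)\cup(X\times B_n)$ forces $y\in B_n$. Hence $y$ belongs to infinitely many of the $B_n$'s, i.e.\ $y\in N$. Thus $L\subseteq(M\times Y)\cup(X\times N)$, completing the proof. There is no real obstacle here; the whole content is the observation that $m(L)=0$ yields summable witnessing sets, so that a standard Borel--Cantelli argument upgrades ``arbitrarily small'' to ``null''.
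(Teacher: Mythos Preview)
Your proof is correct. Both your argument and the paper's rest on the same idea---choose a sequence of witnessing covers $(A_n,B_n)$ with summable measures and pass to a limit---but the executions differ slightly. The paper proceeds asymmetrically: for a fixed $\eps>0$ it picks $A_n,B_n$ with $\mu(A_n)<1/n$ and $\nu(B_n)<\eps\,2^{-n}$, sets $A=\bigcap_n A_n$ and $B=\bigcup_n B_n$, obtains $L\subseteq(A\times Y)\cup(X\times B)$ with $\mu(A)=0$ and $\nu(B)<\eps$, and then iterates with the roles of $X$ and $Y$ exchanged. Your symmetric one-shot argument via $\limsup$ and Borel--Cantelli is cleaner: it avoids the second iteration step and treats both coordinates on equal footing. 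Neither approach offers extra generality over the other; yours is simply the more streamlined packaging of the same content.
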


\begin{proof}
 Fix $\eps >0$. Since $m(L)=0$, there exist sets $A_n, B_n$ such that $\mu (A_n)<1/n$ and $ \nu(B_n)< \eps 2^{-n}$ and $L\subseteq A_n\times Y\cup X\times B_n$.  Set $A:=\bigcap_{n\geq 1}  A_n, B:=\bigcup_{n\geq 1}  B_n $. Then $ \mu (A)=0, \nu(B) <\eps$ and
$$L\subseteq \bigcap_{n\geq 1} (A_n \times Y\cup X\times B)=A \times Y\cup X\times B.$$

Iterating this arguments with the roles of $X$ and $Y$ exchanged we get the desired conclusion.
\end{proof}

The next step proves Lemma \ref{Lnull} in the case where $L$ is
compact.
\begin{lemma}\label{CompactCase}
Assume that $K\subseteq [0,1]^2$ is compact  and satisfies $ \pi(K)=0$ for every $\pi\in \Pi(\lambda,\lambda).$ Then $m(K)=0$.
\end{lemma}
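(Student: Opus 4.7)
The plan is to prove the contrapositive: assume $m(K)=\delta>0$ and produce some $\pi\in\Pi(\lambda,\lambda)$ with $\pi(K)\geq\delta$, contradicting $\pi(K)=0$. My strategy is to discretize by a uniform grid on $[0,1]^2$, invoke finite-dimensional LP duality (K\"onig--Egerv\'ary) on the grid to obtain discrete transport plans with substantial mass on the combinatorial shadow of $K$, and then pass to a weak limit.

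First, I would partition $[0,1]$ into intervals $I_1^n,\ldots,I_n^n$ of length $1/n$ and set $K_n:=\{(i,j)\in[n]\times[n]: (I_i^n\times I_j^n)\cap K\neq\emptyset\}$. Any discrete vertex cover $K_n\subseteq (A'\times[n])\cup([n]\times B')$ pulls back to a cover of $K$ by rectangles of Lebesgue measure $|A'|/n+|B'|/n$, so the assumption $m(K)=\delta$ forces $|A'|/n+|B'|/n\geq\delta$ for every such cover. Since the bipartite-matching LP has integer optima (total unimodularity of the incidence matrix), LP duality turns this min-cover lower bound into a max-matching lower bound: there exist nonnegative numbers $M^{(n)}_{ij}$ supported on $K_n$ with $\sum_j M^{(n)}_{ij}\leq 1/n$, $\sum_i M^{(n)}_{ij}\leq 1/n$, and $\sum_{(i,j)\in K_n}M^{(n)}_{ij}\geq\delta$. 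A trivial outer-product completion off $K_n$ turns $M^{(n)}$ into a bistochastic matrix $\tilde M^{(n)}$ with row and column sums exactly $1/n$, and spreading its mass uniformly on each cell produces
$$\pi_n:=\sum_{i,j}n^2\,\tilde M^{(n)}_{ij}\,(\lambda|_{I_i^n})\otimes(\lambda|_{I_j^n})\in\Pi(\lambda,\lambda).$$

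Next, for any $\varepsilon>0$ and $n$ large enough that $\sqrt{2}/n<\varepsilon$, every cell $I_i^n\times I_j^n$ with $(i,j)\in K_n$ lies in the closed $\varepsilon$-neighborhood $K^\varepsilon$, so $\pi_n(K^\varepsilon)\geq\delta$. Since $[0,1]^2$ is compact, Prokhorov extracts a subsequence $\pi_{n_k}$ converging weakly to some $\pi$; the continuity of the projections forces $\pi\in\Pi(\lambda,\lambda)$. For each fixed $\varepsilon>0$, $K^\varepsilon$ is closed, so Portmanteau gives $\pi(K^\varepsilon)\geq\limsup_k\pi_{n_k}(K^\varepsilon)\geq\delta$. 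Letting $\varepsilon\downarrow 0$ and using $K=\bigcap_{\varepsilon>0}K^\varepsilon$ (compactness of $K$) yields $\pi(K)\geq\delta>0$, the desired contradiction.

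I expect the main obstacle to be identifying the right duality tool at the discrete level: one has to notice that the combinatorial quantity $\min\{|A'|/n+|B'|/n:K_n\subseteq(A'\times[n])\cup([n]\times B')\}$ is precisely the (LP relaxation of the) minimum vertex cover for the bipartite graph with edges $K_n$, and that its LP dual equals the maximum fractional matching, which lifts to a continuous transport plan. Once this finite-dimensional duality is in place, the remaining steps (gridification, bistochastic completion, and the Prokhorov--Portmanteau passage from $K^\varepsilon$ to $K$) are standard.
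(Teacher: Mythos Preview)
Your proof is correct and follows essentially the same strategy as the paper: discretize by an $n\times n$ grid, extract a large matching in the bipartite graph of cells meeting $K$, spread mass uniformly over the matched cells, and pass to a weak limit using that $K$ is closed. The only substantive difference is how the matching is obtained. You invoke K\"onig--Egerv\'ary (or its LP relaxation) to convert the cover lower bound $|A'|+|B'|\geq\delta n$ into a matching of size $\geq\delta n$; the paper instead runs a direct greedy selection, picking cells one at a time with fresh row and column indices, which succeeds for $k$ steps with $k/n\in[\alpha/3,\alpha/2]$ because after $m<k$ choices the $2m<\alpha n$ used rows and columns cannot yet cover $K$. The paper's argument is thus more self-contained (no external combinatorial theorem) at the price of the weaker constant $\alpha/3$, and it stops at a sub-transport plan with marginals $\leq\lambda$ rather than completing to an element of $\Pi(\lambda,\lambda)$ as you do---leaving that trivial completion implicit.
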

\begin{proof}
Assume that $\alpha:=m(K)> 0$. We have to show that there exists a
non-trivial measure $\pi$ on $X\times Y,$ i.e.\ $\pi(K)>0$ such
that $\supp \pi\subseteq K$ and the marginals of $\pi$ satisfy
$P_X(\pi)\leq \mu, P_Y(\pi)\leq \nu$. We aim to construct
increasingly good approximations $\pi_n$ of a such a measure.

 Fix $n$ large enough  and choose
$k\geq 1$ such that $\alpha/3\leq k/n \leq \alpha /2$. Since $K$
is non-empty, there exist $i_1, j_1\in \{0,\ldots, n-1\}$ such
that
$$\Big(\big(\tfrac {i_1}n, \tfrac{j_1}{n}\big)+[0, \tfrac1n]^2\Big) \cap K\neq \emptyset.$$ After $m<k$
steps, assume that we have already chosen $(i_1,j_1),\ldots,
(i_m,j_m)$. Since $2m/n<\alpha$, we have that $K$ is not covered
by
$$ \Big(\bigcup_{l=1}^m \big[\tfrac{i_l}n,
\tfrac{i_l+1}{n}\big]\Big) \times Y\cup X\times
\Big(\bigcup_{l=1}^m \big[\tfrac{j_l}n, \tfrac{j_l+1}
n\big]\Big).$$ Thus there exist $$i_{m+1}\in\{0,\ldots,
n-1\}\setminus\{i_1,\ldots, i_m\}, j_{m+1}\in\{0,\ldots,
n-1\}\setminus\{j_1,\ldots, j_m\}$$ such that $
\Big(\big(\tfrac{i_{m+1}}n, \tfrac{j_{m+1}}n\big)+[0,
\tfrac1n]^2\Big) \cap K\neq \emptyset.$ After $k$ steps we define
the measure $\pi_n$ to be the restriction of $n\cdot\lambda^2$,
(i.e.\ the Lebesgue measure on $[0,1]^2$ multplied with the
constant $n$) to the set $\bigcup_{l=1}^k \big(\frac{i_{l}}n,
\frac{j_{l}}n\big)+[0, \tfrac1n]^2$. Then the  total mass of
$\pi_n$ is bounded from below by $k/n\geq \alpha/3$ and the
marginals of $\pi_n$ satisfy $P_X(\pi_n)\leq \mu, P_Y(\pi_n)\leq
\nu$. These properties carry over to every  weak-star limit point
of the sequence $(\pi_n)$ and each such limit point $\pi$
satisfies $\supp \pi\subseteq K$ since $K$ is closed.
\end{proof}

The next lemma will enable us to reduce the case of a Borel set $L$ to the case of a compact set $L$.

\begin{lemma}\label{CapacityLemma}
Suppose that a Borel set  $L\subseteq [0,1]^2$ satisfies $m(L)>0$. Then there exists a compact set $K\subseteq L$ such that $m(K)>0$ .
\end{lemma}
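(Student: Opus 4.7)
The plan is to deduce Lemma \ref{CapacityLemma} from Choquet's capacitability theorem, applied to the set function $m$ extended to all subsets of $[0,1]^2$ by the same defining infimum. Once $m$ is shown to be a Choquet capacity on the compact metric space $[0,1]^2$, every analytic (in particular, every Borel) set $L$ satisfies the inner-regularity identity
\[
  m(L) = \sup\{m(K) : K \subseteq L,\ K \text{ compact}\},
\]
from which the conclusion is immediate: if $m(L)>0$, the supremum on the right is strictly positive, so some compact $K \subseteq L$ must itself have $m(K)>0$.

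Three conditions need to be verified: (i) monotonicity, (ii) continuity from above along decreasing sequences of compact sets, and (iii) continuity from below along increasing sequences. Monotonicity is obvious. For (ii), given compact $K_n \downarrow K$ and $\epsilon>0$, I would take Borel $A,B$ with $K \subseteq A \times Y \cup X \times B$ and $\lambda(A)+\lambda(B) < m(K)+\epsilon/2$, use outer regularity of Lebesgue measure to enlarge $A,B$ to open sets $A' \supseteq A$, $B' \supseteq B$ with $\lambda(A')+\lambda(B') < m(K)+\epsilon$, and observe that the open set $U := A' \times Y \cup X \times B'$ contains $K$. By the standard compactness argument applied to the decreasing sequence $(K_n\setminus U)_n$ of compacts with empty intersection, one gets $K_n \subseteq U$ for all $n$ large enough, whence $m(K_n) < m(K)+\epsilon$.

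Condition (iii) is the hard one and constitutes the main obstacle. The essential structural feature available is countable subadditivity of $m$: any covers $E_n \subseteq A_n \times Y \cup X \times B_n$ combine into $\bigcup E_n \subseteq (\bigcup A_n) \times Y \cup X \times (\bigcup B_n)$, so $m(\bigcup E_n) \le \sum m(E_n)$. The plan is to leverage this together with a diagonal extraction over near-optimal Borel covers $(A_n,B_n)$ of each $E_n$, producing a single Borel cover of $E = \bigcup_n E_n$ of cost at most $\sup_n m(E_n) + \epsilon$. Simply taking $\bigcup A_n$ and $\bigcup B_n$ is not enough because their Lebesgue measures can accumulate uncontrollably; a more delicate limiting argument is required. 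Natural candidates are passing to $\liminf_n A_n$ and $\liminf_n B_n$ in the Borel $\sigma$-algebra (using that points $(x,y) \in E$ eventually lie in $E_n$ and hence are eventually covered), or passing to weak-$*$ cluster points of the indicators $(\I_{A_n},\I_{B_n})$ in $L^\infty(\mu) \times L^\infty(\nu)$ and then rounding the resulting $[0,1]$-valued fractional functions back to indicator form, only losing a controlled factor. Once (iii) is in place, Choquet's capacitability theorem yields the desired inner approximation of Borel sets by compacts, and Lemma \ref{CapacityLemma} follows.
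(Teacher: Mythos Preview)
Your overall strategy---reduce to Choquet's capacitability theorem---is exactly the paper's. But the paper explicitly warns that $m$ itself is \emph{not} a capacity, so one cannot apply Choquet to $m$ directly; your step (iii) is therefore the crux, and as written it is only a sketch, not a proof.

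Concretely, the $\liminf$ idea fails. If $(x,y)\in E$ then $(x,y)\in E_n$ for all large $n$, so for each large $n$ either $x\in A_n$ or $y\in B_n$; but the alternative can switch with $n$, so there is no reason for $x\in\liminf A_n$ or $y\in\liminf B_n$. Hence $(\liminf A_n)\times Y\cup X\times(\liminf B_n)$ need not cover $E$. The weak-$*$ cluster point idea is closer to the mark but also insufficient as stated: weak-$*$ limits in $L^\infty$ give no pointwise control, so the inequality $\I_{A_n}(x)+\I_{B_n}(y)\ge \I_{E_n}(x,y)$ is not inherited by the limit.

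What the paper does is precisely the ``fractional relaxation plus rounding'' you allude to, but carried out explicitly. One replaces $m$ by
\[
\gamma(L):=\inf\Big\{\textstyle\int f\,d\lambda:\ f:[0,1]\to[0,1],\ f(x)+f(y)\ge \I_L(x,y)\Big\},
\]
shows $\gamma(L)\le m(L)\le 4\gamma(L)$ (the rounding is via the level set $\{f\ge 1/2\}$), and proves that $\gamma$ \emph{is} a capacity. Continuity from below for $\gamma$ is obtained by taking near-optimal $f_n$ for $E_n$ and applying Koml\'os' lemma to pass to convex combinations converging $\lambda$-a.e., which preserves the pointwise inequality in the limit. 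Choquet then applies to $\gamma$, and the comparison with $m$ finishes the argument. Your proposal is on the right track conceptually, but the missing ingredients are exactly this auxiliary functional and the a.e.\ (not merely weak-$*$) convergence via Koml\'os.
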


Lemma \ref{CapacityLemma} will be deduced from \emph{Choquet's
capacitability Theorem}.\footnote{It seems worth noting that
Kellerer also employs the Choquet capacitability Theorem.} Before
we formulate this result we introduce some notation. Given a
compact metric space $Z$, a capacity on $Z$ is a map
$\gamma:\mathcal {P}(Z)\to \R_+$ such that:
\begin{enumerate}
\item $A\subseteq B \Rightarrow \gamma(A)\leq \gamma(B)$.
\item  $A_1\subseteq A_2\subseteq \ldots \Rightarrow  \sup_{n\geq 1} \gamma(A_n)=\gamma (\bigcup_{n\geq 1} A_n).$
\item For  every sequence $K_1\supseteq K_2 \supseteq \ldots$ of
compact sets we have  $\inf_{n\geq 1} \gamma(K_n)=\gamma
(\bigcap_{n\geq 1} K_n).$
\end{enumerate}
The typical example of a capacity is the \emph{outer measure } associated to a finite Borel measure.

\begin{theorem}[Choquet capacitability Theorem]\label{CCT} See \cite{Choq59} and also \cite[Theorem 30.13]{Kech95}.\
Assume that $\gamma$ is a capacity on a polish space $Z$. Then
$$\gamma (A)=\sup\{ \gamma(K):K\subseteq A, \mbox{ $K$
compact}\}$$ for every Borel\footnote{In fact, the assertion of
the Choquet capacitability Theorem is true for the strictly larger
class of \emph{analytic sets}.} set $A\subseteq Z$.
\end{theorem}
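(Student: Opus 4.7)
The plan is to prove Choquet's capacitability theorem by transferring the problem to Baire space $\N^\N$, where the basic cylinder scheme provides a Suslin representation with uniformly shrinking diameters, so that a standard tree-pruning construction delivers a compact subset with almost maximal capacity.

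Every Borel subset of a polish space is analytic, so $A$ is the image of some continuous surjection $f:\N^\N\to A$. I define $\tilde\gamma(B):=\gamma(f(B))$ for $B\subseteq\N^\N$ and verify that $\tilde\gamma$ is a capacity on $\N^\N$: monotonicity and continuity from below transfer directly via $f(\bigcup B_n)=\bigcup f(B_n)$, while continuity from above on compacts uses the identity $\bigcap_n f(K_n)=f(K)$ for $K_n\downarrow K$ compact (a routine diagonal argument using compactness of $K_1$ and continuity of $f$). Since compact subsets of $\N^\N$ are sent to compact subsets of $A$ and $\tilde\gamma(\N^\N)=\gamma(A)$, it will suffice to produce a compact $\tilde K\subseteq\N^\N$ with $\tilde\gamma(\tilde K)\ge\gamma(A)-\varepsilon$.

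Working now on $\N^\N$ with its standard ultrametric, I use that the basic cylinders $N_s:=\{\sigma:\sigma|_{|s|}=s\}$ are clopen, nested ($N_{s\frown k}\subseteq N_s$), and of diameter exactly $2^{-|s|}$. Fix $\varepsilon>0$. I build a finitely branching subtree $T\subseteq\N^{<\N}$ level by level. Start with $T\cap\N^0=\{\emptyset\}$ and $B_0:=N_\emptyset=\N^\N$. Given $T\cap\N^n$ finite with $\tilde\gamma(B_n)\ge\gamma(A)-\varepsilon(1-2^{-n})$ for $B_n:=\bigcup_{s\in T\cap\N^n}N_s$, axiom (2) applied to the increasing sequence $V_K:=\bigcup_{s\in T\cap\N^n}\bigcup_{k\le K}N_{s\frown k}\uparrow B_n$ lets me pick $K_n$ with $\tilde\gamma(V_{K_n})\ge\tilde\gamma(B_n)-\varepsilon 2^{-(n+1)}$; then $T\cap\N^{n+1}:=\{s\frown k:s\in T\cap\N^n,\ k\le K_n\}$ preserves the inductive bound.

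Each $B_n$ is a finite union of clopen sets of diameter at most $2^{-n}$, hence closed and totally bounded in the complete metric space $\N^\N$, hence compact. The sequence is decreasing, so $\tilde K:=\bigcap_n B_n$ is compact. Axiom (3) applied to $B_n\downarrow\tilde K$ yields $\tilde\gamma(\tilde K)=\lim_n\tilde\gamma(B_n)\ge\gamma(A)-\varepsilon$. Pushing forward, $K:=f(\tilde K)$ is a compact subset of $A$ with $\gamma(K)=\tilde\gamma(\tilde K)\ge\gamma(A)-\varepsilon$, and letting $\varepsilon\to 0$, together with the trivial reverse inequality from (1), completes the proof. The main obstacle — as is typical in descriptive set theory — is arranging uniformly shrinking diameters in a Suslin scheme inside a general polish space; without such control the sets $B_n$ need only be closed, not compact, and the final invocation of (3) collapses. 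Working on Baire space via $f$ neatly sidesteps this issue, since the cylinder scheme already enjoys the uniform-diameter property for free.
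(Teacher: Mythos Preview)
The paper does not prove this theorem; it is cited from \cite{Choq59} and \cite{Kech95}. So there is no in-paper argument to compare against, and I assess your proof on its own merits.

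Your overall strategy---pull back to Baire space via a continuous surjection $f:\N^\N\to A$, transfer the capacity to $\tilde\gamma$, and prune the cylinder tree---is the standard one and mostly correct. In particular, your verification that $\tilde\gamma$ is a capacity on $\N^\N$ is fine, as is the inductive construction of the cut-offs $K_n$ and the identification of $\tilde K=\bigcap_n B_n=\prod_n\{0,\dots,K_n\}$ as a compact set.

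There is, however, a genuine gap at the step where you invoke axiom~(3). You write that each $B_n$ is ``closed and totally bounded in the complete metric space $\N^\N$, hence compact.'' This is false: a basic cylinder $N_s\subseteq\N^\N$, though clopen and of diameter $2^{-|s|}$, is homeomorphic to $\N^\N$ itself and is neither totally bounded nor compact. Small diameter does not imply total boundedness; for $\delta<2^{-|s|}$ one would need a finite cover of $N_s$ by $\delta$-balls, and there are infinitely many sub-cylinders of the required depth. Since the $B_n$ are not compact, axiom~(3) does not apply to the sequence $B_n\downarrow\tilde K$, and the crucial inequality $\tilde\gamma(\tilde K)\ge\gamma(A)-\varepsilon$ is left unproved. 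Your closing paragraph is therefore somewhat ironic: you correctly name the obstacle (without compact approximants the appeal to~(3) collapses) and then fall into precisely that trap, conflating ``uniformly small diameter'' with ``totally bounded.''

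The standard repair is to run the descent in $Z$ rather than in $\N^\N$: set $F_m:=\bigcup_{s\in T\cap\N^m}\overline{f(N_s)}\subseteq Z$. One has $F_m\supseteq f(B_m)$, hence $\gamma(F_m)\ge\gamma(A)-\varepsilon$, and a short K\"onig-plus-continuity argument gives $\bigcap_m F_m=f(\tilde K)=:K$. When $Z$ is compact---as the paper's definition of capacity actually stipulates, and as holds in the only application here, $Z=[0,1]^2$---the closed sets $F_m$ are compact and axiom~(3) yields $\gamma(K)\ge\gamma(A)-\varepsilon$. For general Polish $Z$ the $F_m$ are merely closed and a further device is needed; see \cite[Theorem~30.13]{Kech95}.
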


\begin{proof}[Proof of Lemma \ref{CapacityLemma}]
We cannot apply Theorem \ref{CCT} directly to the functional $m$
since $m$ fails to be a capacity, even if it is extended in a
proper way to all subsets of $[0,1]^2.$ A clever trick\footnote{We
thank Rich\'ard Balka and M\'arton Elekes for showing us this
argument (private communication).} is to replace $m$ by the
mapping $\gamma :\P([0,1]^2)\to [0, 2]$, defined by $$\gamma
(L):=\inf \Big\{\textstyle{\int} f\, d\lambda: f:[0,1]\to [0,1],
f(x)+f(y)\geq \I_L(x,y) \mbox { for } (x,y)\in [0,1]\Big\}.$$ We
then have:
\begin{enumerate}
\item[a.] For any Borel set $A\subseteq [0,1]^2$ we have $\gamma(L)\leq m(L)\leq 4 \gamma(L).$
\item[b.] $\gamma$ is a capacity.
\end{enumerate}

To see that (a) holds true notice that $f(x)+f(y)\geq \I_L(x,y)$
implies $L\subseteq \{f\geq 1/2\} \times Y\cup X\times \{f\geq
1/2\}$ and that $L\subseteq A\times Y\cup X\times B$ yields
$\I_{A\cup B}(x)+\I_{A\cup B}(y)\geq \I_L(x,y).$

To prove (b)  it remains to check that $\gamma$ satisfies
properties (2) and (3) of the capacity definition. To see
continuity from below, consider a sequence of sets $A_1\subseteq
A_2 \subseteq \ldots$ increasing to $A$. Pick a sequence of
functions $f_n$ such that $ f_n(x)+f_n(y)\geq \I_{A_n}(x,y)$
point-wise and $\int f\, d\lambda < \gamma(A_n)+1/n$ for each
$n\geq 1$. By Komlos' Lemma there exist functions $g_n\in \conv
\{f_n, f_{n+1},\ldots\}$ such that the sequence $(g_n)$ converges
$\lambda$-a.s.\ to a function $g:[0,1]\to [0,1]$. After changing
$g$ on a $\lambda$-null set if necessary, we have that
$g(x)+g(y)\geq \I_{A}(x,y) $ point-wise. By dominated convergence,
$\int g\, d\lambda= \lim_{n\to \infty} \int g_n\, d\lambda \leq
\lim_{n\to \infty} \gamma(A_n)+1/n = \gamma(A)$. Thus $\gamma $
satisfies property 2. The proof of (3) follows precisely the same
scheme.
\medskip

An application of Choquet's Theorem \ref{CCT} now finishes the
proof of Lemma \ref{CapacityLemma}.
\end{proof}
We have done all the preparations to prove Lemma \ref{Lnull} and now summarize the
necessary steps.
\begin{proof}[Proof of Lemma \ref{Lnull}]
As discussed above, we may assume w.l.g.~that $(X,\mu)=(Y,\nu)=([0,1],\lambda).$ Suppose that the Borel set $L\subseteq [0,1]^2$ satisfies $\pi(L)=0$ for all $\pi\in \Pi(\mu,\nu)$.
Striving for a contradiction, we assume that $m(L)>0$. By Lemma \ref{CapacityLemma}, we find that there exists a compact set $K\subseteq L$ such that $m(K)>0$. By Lemma \ref{CompactCase}, there is a measure $\pi\in \Pi(\mu,\nu)$ such that $ \pi(K)>0$, hence also $\pi(L)>0$ in contradiction to our assumption. Thus $m(L)=0$. By Lemma \ref{EasyNotTrivial} we may conclude that there exist sets $M\subseteq X, N\subseteq Y, \mu(X)=\nu(N)=0 $ such that $L\subseteq M\times Y \cup X\times N$ hence we are done.
\end{proof}

\def\ocirc#1{\ifmmode\setbox0=\hbox{$#1$}\dimen0=\ht0 \advance\dimen0
  by1pt\rlap{\hbox to\wd0{\hss\raise\dimen0
  \hbox{\hskip.2em$\scriptscriptstyle\circ$}\hss}}#1\else {\accent"17 #1}\fi}


\begin{thebibliography}{BGMS09}

\bibitem[BGMS09]{BGMS08}
M.~Beiglb{\"o}ck, M.~Goldstern, G.~Maresch, and W.~Schachermayer.
\newblock Optimal and better transport plans.
\newblock {\em J. Funct. Anal.}, 256(6):1907--1927, 2009.

\bibitem[BLS09]{BeLS09b}
M.~Beiglb{\"o}ck, C.~L{\'e}onard, and W.~Schachermayer.
\newblock On the duality of the {M}onge-{K}antorovich transport problem.
\newblock {\em submitted}, 2009.

\bibitem[BS09]{BeSc08}
M.~Beiglb{\"o}ck and W.~Schachermayer.
\newblock Duality for {B}orel measurable cost functions.
\newblock {\em Trans. Amer. Math. Soc.}, to appear, 2009.

\bibitem[Cho59]{Choq59}
G.~Choquet.
\newblock Forme abstraite du th\'eor\`eme de capacitabilit\'e.
\newblock {\em Ann. Inst. Fourier. Grenoble}, 9:83--89, 1959.

\bibitem[CM06]{CaMc06}
L.~Cafarelli and Robert~J. McCann.
\newblock Free boundaries in optimal transport and {M}onge-{A}mpere obstacle
  problems.
\newblock {\em Ann. of Math. (2)}, to appear, 2006.

\bibitem[dA82]{deAc82}
A.~de~Acosta.
\newblock Invariance principles in probability for triangular arrays of
  {$B$}-valued random vectors and some applications.
\newblock {\em Ann. Probab.}, 10(2):346--373, 1982.

\bibitem[Dud76]{Dudl76}
R.~M. Dudley.
\newblock {\em Probabilities and metrics}.
\newblock Matematisk Institut, Aarhus Universitet, Aarhus, 1976.
\newblock Convergence of laws on metric spaces, with a view to statistical
  testing, Lecture Notes Series, No. 45.

\bibitem[Dud02]{Dudl02}
R.~M. Dudley.
\newblock {\em Real analysis and probability}, volume~74 of {\em Cambridge
  Studies in Advanced Mathematics}.
\newblock Cambridge University Press, Cambridge, 2002.
\newblock Revised reprint of the 1989 original.

\bibitem[Fer81]{Fern81}
X.~Fernique.
\newblock Sur le th\'eor\`eme de {K}antorovich-{R}ubinstein dans les espaces
  polonais.
\newblock In {\em Seminar on {P}robability, {XV} ({U}niv. {S}trasbourg,
  {S}trasbourg, 1979/1980) ({F}rench)}, volume 850 of {\em Lecture Notes in
  Math.}, pages 6--10. Springer, Berlin, 1981.

\bibitem[Fig09]{Figa09}
A.~Figalli.
\newblock The optimal partial transport problem.
\newblock {\em Arch. Rational Mech. Anal.}, to appear, 2009.

\bibitem[F{\"U}02]{FeUs02}
Denis Feyel and Ali~S{\"u}leyman {\"U}st{\"u}nel.
\newblock Measure transport on {W}iener space and the {G}irsanov theorem.
\newblock {\em C. R. Math. Acad. Sci. Paris}, 334(11):1025--1028, 2002.

\bibitem[F{\"U}04a]{FeUs04a}
D.~Feyel and A.~S. {\"U}st{\"u}nel.
\newblock Monge-{K}antorovich measure transportation and {M}onge-{A}mp\`ere
  equation on {W}iener space.
\newblock {\em Probab. Theory Related Fields}, 128(3):347--385, 2004.

\bibitem[F{\"U}04b]{FeUs04b}
Denis Feyel and Ali~S{\"u}leyman {\"U}st{\"u}nel.
\newblock Monge-{K}antorovich measure transportation, {M}onge-{A}mp\`ere
  equation and the {I}t\^o calculus.
\newblock In {\em Stochastic analysis and related topics in {K}yoto}, volume~41
  of {\em Adv. Stud. Pure Math.}, pages 49--74. Math. Soc. Japan, Tokyo, 2004.

\bibitem[F{\"U}06]{FeUs06}
D.~Feyel and A.~S. {\"U}st{\"u}nel.
\newblock Solution of the {M}onge-{A}mp\`ere equation on {W}iener space for
  general log-concave measures.
\newblock {\em J. Funct. Anal.}, 232(1):29--55, 2006.

\bibitem[GM96]{GaMc96}
W.~Gangbo and R.~J. McCann.
\newblock The geometry of optimal transportation.
\newblock {\em Acta Math.}, 177(2):113--161, 1996.

\bibitem[GR81]{GaRu81}
N.~Gaffke and L.~R{\"u}schendorf.
\newblock On a class of extremal problems in statistics.
\newblock {\em Math. Operationsforsch. Statist. Ser. Optim.}, 12(1):123--135,
  1981.

\bibitem[Kan42]{Kant42}
L.~Kantorovich.
\newblock On the translocation of masses.
\newblock {\em C. R. (Doklady) Acad. Sci. URSS (N.S.)}, 37:199--201, 1942.

\bibitem[Kec95]{Kech95}
A.~S. Kechris.
\newblock {\em Classical descriptive set theory}, volume 156 of {\em Graduate
  Texts in Mathematics}.
\newblock Springer-Verlag, New York, 1995.

\bibitem[Kel84]{Kell84}
H.~G. Kellerer.
\newblock Duality theorems for marginal problems.
\newblock {\em Z. Wahrsch. Verw. Gebiete}, 67(4):399--432, 1984.

\bibitem[KR58]{KaRu58}
L.~V. Kantorovi{\v{c}} and G.~{\v{S}}. Rubin{\v{s}}te{\u\i}n.
\newblock On a space of completely additive functions.
\newblock {\em Vestnik Leningrad. Univ.}, 13(7):52--59, 1958.

\bibitem[Mik06]{Mika06}
T.~Mikami.
\newblock A simple proof of duality theorem for {M}onge-{K}antorovich problem.
\newblock {\em Kodai Math. J.}, 29(1):1--4, 2006.

\bibitem[MT06]{MiTh06}
T.~Mikami and M.~Thieullen.
\newblock Duality theorem for the stochastic optimal control problem.
\newblock {\em Stochastic Process. Appl.}, 116(12):1815--1835, 2006.

\bibitem[RR95]{RaRu95}
D.~Ramachandran and L.~R{\"u}schendorf.
\newblock A general duality theorem for marginal problems.
\newblock {\em Probab. Theory Related Fields}, 101(3):311--319, 1995.

\bibitem[RR96]{RaRu96}
D.~Ramachandran and L.~R{\"u}schendorf.
\newblock Duality and perfect probability spaces.
\newblock {\em Proc. Amer. Math. Soc.}, 124(7):2223--2228, 1996.

\bibitem[Szu82]{Szul82}
A.~Szulga.
\newblock On minimal metrics in the space of random variables.
\newblock {\em Teor. Veroyatnost. i Primenen.}, 27(2):401--405, 1982.

\bibitem[Vil03]{Vill03}
C.~Villani.
\newblock {\em Topics in optimal transportation}, volume~58 of {\em Graduate
  Studies in Mathematics}.
\newblock American Mathematical Society, Providence, RI, 2003.

\bibitem[Vil09]{Vill09}
C.~Villani.
\newblock {\em Optimal Transport. Old and New}, volume 338 of {\em Grundlehren
  der mathematischen Wissenschaften}.
\newblock Springer, 2009.

\end{thebibliography}
\end{document}